\newcommand{\RR}{\mathbb{R}}
\newcommand{\ZZ}{\mathbb{Z}}
\newtheorem{theorem}{Theorem}[section]
\newtheorem{proposition}[theorem]{Proposition}
\newtheorem{definition}[theorem]{Definition}
\newtheorem{remark}[theorem]{Remark}
\newcommand{\spb}[1]{\smallskip}
\newcommand{\mpb}[1]{\medskip}
\newcommand{\bpb}[1]{\bigskip}
\renewcommand{\a}{\alpha}
\renewcommand{\d}{\delta}
\newcommand{\D}{\Delta}
\newcommand{\G}{\Gamma}
\begin{document}
%\vspace*{2cm}

\DeclareGraphicsExtensions{.jpg,.pdf,.mps,.png}

\title{\vspace{6cm} New Lower Bounds for the First Variable Zagreb Index}

\author%[Alvaro Mart\'{\i}nez-P\'erez]
{Alvaro Mart\'{\i}nez-P\'erez$^a$, Jos\'e M. Rodr{\'\i}guez$^b$}
%$^{(1)}$}
%\address{ Facultad CC. Sociales de Talavera,
%Avda. Real Fábrica de Seda, s/n. 45600 Talavera de la Reina, Toledo, Spain}
%\email{alvaro.martinezperez@uclm.es}
%\thanks{$^{(1)}$ Supported in part by a grant
%from Ministerio de Econom{\'\i}a y Competitividad (MTM 2015-63612P), Spain.}

%\author[Jos\'e M. Rodr{\'\i}guez]{Jos\'e M. Rodr{\'\i}guez$^{(2)}$}
%\address{Departamento de Matem\'aticas, Universidad Carlos III de Madrid,
%Avenida de la Universidad 30, 28911 Legan\'es, Madrid, Spain}
%\email{jomaro@math.uc3m.es}
%\thanks{$^{(2)}$ Supported in part by two grants
%from Ministerio de Econom{\'\i}a y Competitividad (MTM 2016-78227-C2-1-P and MTM 2015-69323-REDT), Spain, and a grant from CONACYT (FOMIX-CONACyT-UAGro 249818), M\'exico.}

%\date{\today}

\maketitle{}

%\centerline{{\bf New Lower bounds Zagreb B 02.tex}}

\centerline{\emph{$^a$Facultad de Ciencias Sociales, Universidad de Castilla-La Mancha,}}
\centerline{\emph{Avda. Real Fábrica de Seda, s/n. 45600 Talavera de la Reina, Toledo, Spain}}
\centerline{e-mail: alvaro.martinezperez@uclm.es}

\

\centerline{\emph{$^b$Departamento de Matem\'aticas, Universidad Carlos III de Madrid,}}
\centerline{\emph{Avenida de la Universidad 30, 28911 Legan\'es, Madrid, Spain}}
\centerline{e-mail: jomaro@math.uc3m.es}

\

\centerline{}

\thispagestyle{empty}

\begin{abstract}
The aim of this paper is to obtain new sharp inequalities for a large family of topological indices, including the first variable Zagreb index $M_1^\a$, and to characterize the set of extremal graphs with respect to them.
Our main results provide lower bounds on this family of topological indices involving just the minimum and the maximum degree of the graph.
These inequalities are new even for the first Zagreb, the inverse and the forgotten indices.
\end{abstract}

%{\it Keywords:  Geometric-arithmetic index, Graph invariant, Vertex-degree-based graph invariant, Topological index.}
%
%{\it 2010 AMS Subject Classification numbers: 05C07, 92E10.} %Primary ; Secondary .

\baselineskip=0.30in

\section{Introduction}

A topological descriptor is a single number that represents a chemical structure in graph-theoretical terms via the
molecular graph, they play a significant role in mathematical chemistry especially in the QSPR/QSAR investigations.
A topological descriptor is called a topological index if it correlates with a molecular property.
Topological indices are used to understand physicochemical properties of chemical compounds,
since they capture some properties of a molecule in a single number.
Hundreds of topological indices have been introduced and studied, starting with the
seminal work by Wiener \cite{Wi}.

%A single number, representing a chemical structure in graph-theoretical terms via the
%molecular graph, is called a topological descriptor and if it in addition correlates with
%a molecular property it is called topological index, which is used to understand physicochemical
%properties of chemical compounds.
%Topological indices are interesting since they capture some of the properties of a molecule in a single number.
%Hundreds of topological indices have been introduced and studied, starting with the
%seminal work by Wiener in which he used the sum of all shortest-path distances of
%a (molecular) graph for modeling physical properties of alkanes (see \cite{Wi}).

Topological indices based on end-vertex degrees of edges have been
used over 40 years. Among them, several indices are recognized to be useful tools in
chemical researches.
Probably, the best know such descriptor is the Randi\'c connectivity
index ($R$) \cite{R}. %There are more than thousand papers and a couple of books dealing with
%this molecular descriptor (see, e.g., \cite{GF}, \cite{LG}, \cite{LS}, \cite{RS}, \cite{RS0} and the references therein).

Two of the main successors of the Randi\'c index are the first and second Zagreb indices,
denoted by $M_1$ and $M_2$, respectively, and introduced by Gutman and Trinajsti\'c in $1972$ (see \cite{GT}).
They are defined as
$$
M_1(G) = \sum_{u\in V(G)} d_u^2,
\qquad
M_2(G) = \sum_{uv\in E(G)} d_u d_v ,
\qquad
$$
where $uv$ denotes the edge of the graph $G$ connecting the vertices $u$ and $v$, and
$d_u$ is the degree of the vertex $u$.
Along the paper, we will denote by $m$ and $n$, the cardinality of the sets
$E(G)$ and $V(G)$, respectively.

There is a vast amount of research on the Zagreb indices.
For details of their chemical applications and mathematical theory see \cite{Gutman}, \cite{GD}, \cite{GR}, \cite{NKMT}, and the references therein.

In \cite{LZheng}, \cite{LZhao}, \cite{MN}, the \emph{first and second variable Zagreb indices} are defined as
$$
M_1^{\a}(G) = \sum_{u\in V(G)} d_u^{2\a},
\qquad
M_2^{\a}(G) = \sum_{uv\in E(G)} (d_u d_v)^\a ,
$$
with $\a \in \RR$.

Note that $M_1^{0}$ is $n$, $M_1^{1/2}$ is $2m$, $M_1^{1}$ is the first Zagreb index $M_1$, $M_1^{-1/2}$ is the inverse index $ID$ \cite{Faj},
$M_1^{3/2}$ is the forgotten index $F$, etc.;
also, $M_2^{0}$ is $m$, $M_2^{-1/2}$ is the usual Randi\'c index, $M_2^{1}$ is the second Zagreb index $M_2$,
$M_2^{-1}$ is the modified Zagreb index \cite{NKMT}, etc.
%Note that it is interesting to study $M_1^\a$ for $\a\neq 0,1,$ and $M_2^\a$ for $\a\neq 0,$ since if $G$ has $n$ vertices and $m$ vertices, then
%$M_1^0(G)=n$, $M_1^1(G)=2m$ and $M_2^0(G)=m$.

The concept of variable molecular descriptors was proposed as a new way of
characterizing heteroatoms in molecules (see \cite{R2}, \cite{R3}), but also to assess the structural differences (e.g.,
the relative role of carbon atoms of acyclic and cyclic parts in alkylcycloalkanes \cite{RPL}).
The idea behind the variable molecular descriptors is that the variables are determined during the
regression so that the standard error of estimate for a particular studied property is as small as possible (see, e.g., \cite{MN}).

In the paper of Gutman and To\v{s}ovi\'c \cite{Gutman8}, the correlation abilities of $20$ vertex-degree-based topological indices
occurring in the chemical literature were tested for the case of standard
heats of formation and normal boiling points of octane isomers.
It is remarkable to realize that the second variable Zagreb index $M_2^\alpha$
with exponent $\alpha = -1$ (and to a lesser extent with exponent $\alpha = -2$)
performs significantly better than the Randi\'c index ($R=M_2^{-0.5}$).

The first variable Zagreb index has been used to study molecular complexity, chirality, ZE-isomerism and hetero-systems.
Overall, variable Zagreb indices exhibit a potential applicability for deriving multi-linear regression models \cite{Drmota}.

Various properties, relations and uses of the first variable Zagreb index are discussed in several papers (see, e.g., \cite{LL}, \cite{ZWC}, \cite{ZZ}).

Throughout this work, $G=(V (G),E (G))$ denotes a (non-oriented) finite simple (without multiple edges and loops) non-trivial ($E(G) \neq \emptyset$) graph.
A main topic in the study of topological indices is to find bounds of the indices involving several parameters.
The aim of this paper is to obtain new sharp inequalities for a large family of topological indices and to characterize the set of extremal graphs with respect to them.
Our main results provide lower bounds on this family of topological indices involving just the minimum and the maximum degree of the graph
(see Theorems  \ref{t:tilde1} and \ref{t:tilde6}).
This family of indices includes, among others, the first variable Zagreb index $M_1^\a$ (see Section 4).
The inequalities obtained are new even for the first Zagreb, the inverse and the forgotten indices.

\section{Minimum and maximum degree}

Given integers $1 \le \delta \le \Delta$, let us define $\mathcal{G}_{\d,\D}$ as the set of graphs $G$ with minimum degree $\delta$, maximum degree $\Delta$ and such that:

$(1)$ $G$ is isomorphic to the complete graph with $\D+1$ vertices $K_{\D+1}$, if $\d = \D$,

$(2)$ $|V(G)|=\D+1$ and there are $\D$ vertices with degree $\d$, if $\d < \D$ and $\Delta (\delta+1)$ is even,

$(3)$ $|V(G)|=\D+1$ and there are $\D-1$ vertices with degree $\d$ and a vertex with degree $\d+1$, if $\d < \D-1$ and $\Delta (\delta+1)$ is odd,

$(4)$ $|V(G)|=\D+1$ and there are $\D-1$ vertices with degree $\d$ and two vertices with degree $\D$, if $\d = \D-1$ and $\Delta$ is odd (and thus $\Delta (\delta+1)$ is odd).

\begin{remark} \label{remark sharp0}
%If $\d > 0$, then
Every graph $G \in \mathcal{G}_{\d,\D}$ has maximum degree $\Delta$ and $|V(G)|=\D+1$.
Hence, every graph $G \in \mathcal{G}_{\d,\D}$ is connected. % when $\d > 0$.
%Also, every graph $G \in \mathcal{G}_{\d,\D}'$ is connected.
\end{remark}

Let us recall the following proposition from \cite{MR}:

\begin{proposition}\label{prop edges}
For any integers $1 \le \delta \le \Delta$, we have $\mathcal{G}_{\d,\D} \neq \emptyset$.
Let $G$ be a graph with minimum degree $\delta$ and maximum degree $\Delta$. Then
$$
|E(G)|\geq \frac{\Delta(\delta+1)}{2}  \quad \mbox{if} \ \Delta (\delta+1) \ \mbox{is even},\quad
|E(G)|\geq \frac{\Delta(\delta+1)+1}{2}  \quad \mbox{if} \ \Delta  (\delta+1) \ \mbox{is odd},
$$
with equality if and only if $G \in \mathcal{G}_{\d,\D}$.
\end{proposition}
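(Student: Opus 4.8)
The plan is to combine the handshaking lemma with a vertex count and an integrality (parity) argument for the bound, then read off the four equality cases, and finally exhibit explicit graphs for the nonemptiness claim. Throughout, write $n:=|V(G)|$.

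First I would prove the inequality. Since $G$ has maximum degree $\Delta$, fix a vertex $v$ with $d_v=\Delta$; then $v$ together with its $\Delta$ neighbours shows $n\ge \Delta+1$. Bounding the remaining degrees below by $\delta$, the handshaking lemma gives
$$
2|E(G)|=\sum_{u\in V(G)}d_u=\Delta+\sum_{u\ne v}d_u\ge \Delta+(n-1)\delta\ge \Delta+\Delta\delta=\Delta(\delta+1),
$$
where the last step uses $n-1\ge\Delta$ and $\delta\ge 1$. If $\Delta(\delta+1)$ is even this is exactly the claimed bound. If $\Delta(\delta+1)$ is odd, then since $2|E(G)|$ is even and at least the odd number $\Delta(\delta+1)$, it must be at least $\Delta(\delta+1)+1$, which is the second bound.

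Next I would analyse equality. Equality throughout the displayed chain forces $n=\Delta+1$: indeed $n\ge\Delta+2$ yields $2|E(G)|\ge\Delta(\delta+1)+\delta$, which strictly exceeds the even bound (as $\delta\ge1$) and also the odd bound, since in the odd case $\Delta$ is odd and $\delta$ even, so $\delta\ge 2$. With $n=\Delta+1$, the $\Delta$ vertices other than $v$ have degree sum $2|E(G)|-\Delta$. In the even case this equals $\Delta\delta$, forcing each of them to have degree exactly $\delta$; hence $G=K_{\Delta+1}$ when $\delta=\Delta$ (case (1)) and $G$ has the degree sequence of case (2) when $\delta<\Delta$. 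In the odd case the sum equals $\Delta\delta+1$, so exactly one of these vertices has degree $\delta+1$ and the rest degree $\delta$; this is case (3) when $\delta+1<\Delta$ and case (4) when $\delta+1=\Delta$, where the extra vertex becomes a second vertex of degree $\Delta$. Conversely, a direct degree count shows every member of $\mathcal{G}_{\delta,\Delta}$ attains the bound.

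Finally, for $\mathcal{G}_{\delta,\Delta}\ne\emptyset$ I would construct the extremal graphs explicitly using a hub. Take one vertex adjacent to all other $\Delta$ vertices; it then suffices to realise the residual degree sequence on those $\Delta$ vertices, which is $(\delta-1)$-regular in case (2) and near-regular with degrees in $\{\delta-1,\delta\}$ in case (3); for case (4) one uses two hubs and realises a $(\Delta-3)$-regular graph on the remaining $\Delta-1$ vertices. In each case the residual sequence has maximum degree at most one less than the number of vertices it lives on, and one checks directly that its degree sum is even (using that $\Delta(\delta+1)$ is even in case (2), and that $\Delta$ is odd and $\delta$ even in cases (3)--(4)); hence a realising graph exists by the classical existence criterion for regular and near-regular degree sequences. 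The step I expect to be the main obstacle is exactly this graphicality verification at the boundary $\delta=\Delta-1$, where the single degree-$(\delta+1)$ vertex merges with a second maximum-degree vertex, and the bookkeeping that separates cases (3) and (4) must be handled with care.
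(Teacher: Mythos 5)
You should first note that this paper contains no proof of Proposition \ref{prop edges} at all: it is explicitly ``recalled'' from \cite{MR}, so there is no internal argument to compare against, and your proposal must be judged on its own. On that basis, your proof is correct and complete. The handshaking estimate $2|E(G)|\ge \Delta+(n-1)\delta\ge\Delta(\delta+1)$ plus the parity upgrade in the odd case gives both bounds; your equality analysis is sound, including the one delicate point, namely that in the odd case $\Delta(\delta+1)$ odd forces $\delta$ even, hence $\delta\ge 2$, so that $n\ge\Delta+2$ strictly overshoots even the bound $\Delta(\delta+1)+1$; and with $n=\Delta+1$ the residual degree sums $\Delta\delta$ and $\Delta\delta+1$ recover exactly the four degree sequences defining $\mathcal{G}_{\delta,\Delta}$, with the $\delta+1=\Delta$ subcase correctly folded into case (4). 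The nonemptiness constructions also check out: the hub plus a $(\delta-1)$-regular graph on $\Delta$ vertices (parity $(\delta-1)\Delta\equiv\Delta(\delta+1)\pmod 2$), the hub plus a near-regular sequence with degrees in $\{\delta-1,\delta\}$, and the two hubs plus a $(\Delta-3)$-regular graph on $\Delta-1$ vertices (both factors even since $\Delta$ is odd) are all graphical by the standard realization results you invoke, and the degree bounds $\delta\le\Delta-1$, respectively $\Delta-3\le\Delta-2$, hold in the relevant cases. This handshake-plus-construction route is the natural one and, in spirit, is what the authors' cited source \cite{MR} does; the only stylistic difference is that an explicit circulant-type construction could replace your appeal to the general graphicality criterion, but nothing in your argument depends on that choice.
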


%Suppose $h$ is increasing in the first variable and, since it is symmetric, in both of them.

Let $\mathcal{I}$ be any topological index defined as
$$
\mathcal{I}(G)=\sum_{uv\in E(G)}h(d_u,d_v),
$$
where $h(x,y)$ is any positive symmetric function $h:\ZZ^+ \times \ZZ^+ \rightarrow \RR^+$.

Let $\mathcal{J}$ be any topological index defined as
$$
\mathcal{J}(G):=\sum_{u\in V(G)}\tilde{h}(d_u) ,
$$
where
$\tilde{h}$ is any positive function $\tilde{h}: \ZZ^+ \rightarrow \RR^+$.

\begin{definition} A graph $G$ with minimum degree $\delta$ and maximum degree $\Delta$ is \emph{minimal} for $\mathcal{J}$ if $\mathcal{J}(G) \le \mathcal{J}(\G)$
for every graph $\G$ with minimum degree $\delta$ and maximum degree $\Delta$.
\end{definition}

\begin{remark} \label{r:h}
Notice that every index $\mathcal{J}$ can be written also as an index $\mathcal{I}$. It suffices to define
$h(d_u,d_v):=\frac{\tilde{h}(d_u)}{d_u}+\frac{\tilde{h}(d_v)}{d_v}$ and then it can be readily seen that
$$\sum_{uv\in E(G)} \Big( \frac{\tilde{h}(d_u)}{d_u}+\frac{\tilde{h}(d_v)}{d_v}\Big) =\sum_{u\in V(G)}\tilde{h}(d_u).$$
\end{remark}

In \cite{MR2} appear several bounds for $\mathcal{I}$; in particular, \cite[Theorems 2.4 and 2.5]{MR2} give lower bounds for $\mathcal{I}$ when $h$ is a non-decreasing function (in the first variable).
Next, we are going to prove sharp inequalities for $\mathcal{J}$, when $\tilde{h}$ is a non-decreasing function, which can not be deduced from \cite[Theorems 2.4 and 2.5]{MR2}
and Remark \ref{r:h} (since there are non-decreasing functions $\tilde{h}$ such that their corresponding functions $h$ are strictly decreasing in the first variable).
Furthermore, the inequalities in Theorem \ref{t:tilde1} below are simpler than the ones in \cite[Theorems 2.4 and 2.5]{MR2}.

\begin{theorem} \label{t:tilde1}
Consider any integers $1 \le \d \le \D$ and assume that $\tilde{h}$ is non-decreasing.
If $G\in \mathcal{G}_{\d,\D}$, then it is minimal for $\mathcal{J}$, and thus
$$
\begin{aligned}
\mathcal{J}(G)\geq \D\tilde{h}(\d)+\tilde{h}(\D) & \qquad \text{if $\D(\d+1)$ is even},
\\
\mathcal{J}(G)\geq (\D-1)\tilde{h}(\d)+\tilde{h}(\d+1)+\tilde{h}(\D) & \qquad \text{if $\D(\d+1)$ is odd}.
\end{aligned}
$$
Moreover, if either
$\D(\d+1)$ is even and $\tilde{h}(\d)<\tilde{h}(\d+1)$ or if $\D(\d+1)$ is odd and $\tilde{h}(\d)<\tilde{h}(\d+1)<\tilde{h}(\d+2)$,
then $G$ is minimal for $\mathcal{J}$ if and only if $G\in \mathcal{G}_{\d,\D}$.
\end{theorem}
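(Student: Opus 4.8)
The plan is to prove a single lower bound on $\mathcal{J}(\Gamma)$ valid for \emph{every} graph $\Gamma$ with minimum degree $\d$ and maximum degree $\D$, check that this bound is attained exactly by the value $\mathcal{J}(G)$ when $G\in\mathcal{G}_{\d,\D}$ (which makes such $G$ minimal and yields the two displayed inequalities), and then recover the characterization by forcing equality throughout under the strict hypotheses. Throughout, write $n=|V(\Gamma)|$; the only facts I need are that a vertex of degree $\D$ has $\D$ distinct neighbours (so $n\ge\D+1$), that $\sum_{u}d_u$ is even (handshake lemma), and that $\tilde{h}$ is positive and non-decreasing (so $0<\tilde{h}(\d)\le\tilde{h}(k)$ for $k\ge\d$).

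For the even case I would single out one vertex $w$ of degree $\D$ and bound the rest trivially:
$$
\mathcal{J}(\Gamma)=\tilde{h}(\D)+\sum_{u\neq w}\tilde{h}(d_u)\ge \tilde{h}(\D)+(n-1)\tilde{h}(\d)\ge \tilde{h}(\D)+\D\,\tilde{h}(\d),
$$
where the last step uses $n-1\ge\D$ and $\tilde{h}(\d)>0$. A one-line check of the degree sequences in the definition shows that the graphs of types $(1)$ and $(2)$ realise the value $\D\tilde{h}(\d)+\tilde{h}(\D)$ exactly (for type $(1)$ one uses $\tilde{h}(\d)=\tilde{h}(\D)$); since $\mathcal{G}_{\d,\D}\neq\emptyset$ by Proposition \ref{prop edges}, this proves minimality and the even inequality at once.

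The odd case is the crux. Here $\D(\d+1)$ odd forces $\D$ odd and $\d$ even, and I would exploit a parity obstruction: if at most one vertex had degree exceeding $\d$, then the degree sequence would be constant equal to $\d$ (impossible since $\D>\d$) or else $(n-1)$ vertices of degree $\d$ and one of degree $\D$, for which $\sum_u d_u=(n-1)\d+\D$ is odd, contradicting the handshake lemma. Hence at least two vertices have degree $>\d$, one of which has degree $\D$. Isolating these two and bounding the remaining $n-2$ vertices by $\tilde{h}(\d)$ gives
$$
\mathcal{J}(\Gamma)\ge (n-2)\tilde{h}(\d)+\tilde{h}(\d+1)+\tilde{h}(\D)\ge (\D-1)\tilde{h}(\d)+\tilde{h}(\d+1)+\tilde{h}(\D),
$$
and the types $(3)$ and $(4)$ graphs attain this value (in type $(4)$ one uses $\d+1=\D$, hence $\tilde{h}(\d+1)=\tilde{h}(\D)$), finishing the first assertion.

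For the characterization, any minimal $G$ attains the relevant bound, so I would trace equality back through each inequality using the strict hypotheses. In the even case, equality in $\sum_{u\neq w}\tilde{h}(d_u)\ge(n-1)\tilde{h}(\d)$ together with $\tilde{h}(\d)<\tilde{h}(\d+1)$ forces every $u\neq w$ to have degree exactly $\d$, while $n-1=\D$ forces $n=\D+1$; this is precisely type $(1)$--$(2)$. In the odd case the same argument gives $n=\D+1$ and exactly two vertices of degree $>\d$; when $\d<\D-1$ the strict inequality $\tilde{h}(\d+1)<\tilde{h}(\d+2)$ pins the non-maximal high vertex to degree $\d+1$, yielding type $(3)$, whereas when $\d=\D-1$ that second vertex is automatically of degree $\D$, yielding type $(4)$ (and only $\tilde{h}(\d)<\tilde{h}(\d+1)$ is used there). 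The main obstacle is the odd-case parity step, together with the careful bookkeeping that separates types $(3)$ and $(4)$ according to whether the second high-degree vertex has degree $\d+1$ or $\D$.
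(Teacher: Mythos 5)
Your proof is correct and follows essentially the same route as the paper's: isolate the maximum-degree vertex, bound the remaining $\ge\D$ vertices below by $\tilde{h}(\d)$, use the handshake-lemma parity obstruction in the odd case to produce a second vertex of degree $\ge\d+1$, verify that graphs in $\mathcal{G}_{\d,\D}$ attain the bound, and trace equality under the strict hypotheses for the characterization. The only difference is presentational: you spell out the parity argument that the paper merely invokes with ``recall,'' and you argue the characterization by forcing equality rather than by the paper's contrapositive (``if $\G\notin\mathcal{G}_{\d,\D}$, then $\mathcal{J}(\G)>\mathcal{J}(G)$''), which are equivalent.
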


\begin{proof} Let $\G$ be a graph with minimum degree $\d$ and maximum degree $\D$. Since there is a vertex with degree $\D$, there are at least $\D$ additional vertices with degree at least $\d$.

If $\D(\d+1)$ is even, then $\mathcal{J}(\G)\geq \D\tilde{h}(\d)+\tilde{h}(\D)=\mathcal{J}(G)$ for every $G\in \mathcal{G}_{\d,\D}$.
If $\G\notin \mathcal{G}_{\d,\D}$, then there is some other vertex with degree at least $\d+1$ or some new vertex. In both cases, since $\tilde{h}(\d)<\tilde{h}(\d+1)$ and $\tilde{h}$ is non-decreasing and positive, it is trivial to check that
$\mathcal{J}(\G)>\mathcal{J}(G)$ for $G\in \mathcal{G}_{\d,\D}$.

If $\D(\d+1)$ is odd, then there are a vertex $v_0$ with degree $\D$ and at least $\D$ additional vertices.
Recall that there is a vertex different from $v_0$ with degree at least $\d+1$.
Thus, $\mathcal{J}(\G)\geq (\D-1)\tilde{h}(\d)+\tilde{h}(\d+1)+\tilde{h}(\D)=\mathcal{J}(G)$ for every $G\in \mathcal{G}_{\d,\D}$.

If $\G\notin \mathcal{G}_{\d,\D}$, then there is some other vertex with degree at least $\d+1$, a vertex with degree at least $\d+2$, or some new vertex.
In these cases, since $\tilde{h}(\d)<\tilde{h}(\d+1)<\tilde{h}(\d+2)$ and $\tilde{h}$ is non-decreasing and positive, it is trivial to check that
$\mathcal{J}(\G)>\mathcal{J}(G)$ for $G\in \mathcal{G}_{\d,\D}$.
\end{proof}

Our next goal is to prove Theorem \ref{t:tilde6}, a similar result to Theorem \ref{t:tilde1}, when $\tilde{h}$ is a non-increasing function.
The proof of Theorem \ref{t:tilde6} is harder that of Theorem \ref{t:tilde1},
and requires several previous results, namely,
Propositions \ref{p:H}, \ref{p:HHHH}, \ref{Prop: minimal D+2}, \ref{Prop: 211} and \ref{Prop: 213},
and Theorems \ref{t:tilde2},  \ref{t:tilde3}, \ref{t:tilde4} and \ref{t:tilde5}.

%\begin{definition} A graph $G$ with minimum degree $\delta$ and maximum degree $\Delta$ is \emph{minimal} for $\mathcal{J}^{k}_{\d,\D}$ if $\mathcal{J}(G) \le \mathcal{J}(\G)$
%for every graph $\G$ with minimum degree $\delta$, maximum degree $\Delta$ and $k$ vertices.
%\end{definition}

\begin{definition}
Let $\mathfrak{G}^{n}_{\d,\D}$ be the family of graphs with $n$ vertices, minimum degree $\d$ and maximum degree $\D$.
\end{definition}

\begin{proposition} \label{proposition211}
If $G\in \mathfrak{G}^{n}_{\D,\D}$, then it is minimal for $\mathcal{J}$ if and only if $n=\D+1$ and $G$ is the complete graph $K_{\D+1}$, and thus
$\mathcal{J}(G) \ge (\D+1)\tilde{h}(\D)$ for every graph $G$ with minimum and maximum degree $\D$.
\end{proposition}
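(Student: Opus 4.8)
The plan is to exploit the fact that every graph in $\mathfrak{G}^{n}_{\D,\D}$ is $\D$-regular, so that $\mathcal{J}$ reduces to a trivial function of the number of vertices. Indeed, if $G$ has minimum and maximum degree both equal to $\D$, then $d_u=\D$ for every $u\in V(G)$, whence
$$
\mathcal{J}(G)=\sum_{u\in V(G)}\tilde{h}(d_u)=n\,\tilde{h}(\D).
$$
Since $\tilde{h}$ takes values in $\RR^+$, the number $\tilde{h}(\D)$ is a fixed positive constant; therefore $\mathcal{J}(G)$ depends only on $n$ and is \emph{strictly} increasing in $n$. Consequently, minimizing $\mathcal{J}$ over the class of all $\D$-regular graphs (which is precisely the class appearing in the definition of minimality when $\d=\D$) is exactly the same as minimizing the number of vertices $n$.

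Next I would record the elementary fact that a $\D$-regular graph must have at least $\D+1$ vertices: any fixed vertex $u$ has $\D$ distinct neighbours, so $|V(G)|\geq \D+1$, and equality forces $u$ to be adjacent to every one of the remaining $\D$ vertices. Applying this to each vertex in turn shows that equality $n=\D+1$ holds if and only if $G$ is the complete graph $K_{\D+1}$. Combining this with the reduction above gives
$$
\mathcal{J}(G)=n\,\tilde{h}(\D)\geq (\D+1)\tilde{h}(\D),
$$
with equality precisely when $n=\D+1$, i.e.\ when $G$ is isomorphic to $K_{\D+1}$. This yields simultaneously the characterization of the minimal graphs and the stated lower bound for arbitrary $G$ with minimum and maximum degree $\D$.

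There is no serious obstacle here; the single point that needs care is the strictness of the monotonicity $n\mapsto n\,\tilde{h}(\D)$, which is guaranteed solely by the standing hypothesis $\tilde{h}>0$ (so that $\tilde{h}(\D)>0$, ruling out ties for different values of $n$). It is worth emphasizing that, in contrast with the neighbouring statements, this proposition requires \emph{no} monotonicity assumption on $\tilde{h}$: because all vertex degrees coincide, only the single value $\tilde{h}(\D)$ ever enters the computation, and the behaviour of $\tilde{h}$ elsewhere is irrelevant.
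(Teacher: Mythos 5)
Your proof is correct and follows essentially the same route as the paper, which simply notes that $\mathcal{J}(G)=n\tilde{h}(\D)$ with $n\geq\D+1$ for any $\D$-regular graph; you have merely filled in the details (strict monotonicity in $n$ from $\tilde{h}(\D)>0$, and the fact that $n=\D+1$ forces $G=K_{\D+1}$) that the paper leaves implicit.
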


\begin{proof}
It suffices to note that $\mathcal{J}(G)=n\tilde{h}(\D)$ with $n\geq \D+1$.
\end{proof}

By Proposition \ref{proposition211}, in order to find minimal graphs for $\mathcal{J}$, it suffices to consider the case $\d< \D$.

For any $1 \le \delta<\D$, let us define the family $\mathcal{H}_{\d,\D}$ of graphs with $\D+1$ vertices, where one of them has degree $\d$, $\d$ of them have degree $\D$ and
the other $\D-\d$ vertices have degree $\D-1$.

\begin{proposition} \label{p:H}
For every integers $1 \le \delta <\D$, the family $\mathcal{H}_{\d,\D}$ is non-empty.
\end{proposition}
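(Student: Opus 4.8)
The plan is to prove the proposition by exhibiting an explicit member of $\mathcal{H}_{\d,\D}$, obtained as a small modification of the complete graph. First I would record the bookkeeping: a graph in $\mathcal{H}_{\d,\D}$ has $1+\d+(\D-\d)=\D+1$ vertices, so the prescribed multiplicities genuinely partition the vertex set, and its degree sum is $\d+\d\D+(\D-\d)(\D-1)=\D(\D-1)+2\d$, which is even; these sanity checks suggest the degree sequence is realizable and tell me what to aim for.

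The construction itself is the heart of the matter. I would take the complete graph $K_{\D+1}$ on the labeled vertex set $\{v,\,w_1,\dots,w_\d,\,u_1,\dots,u_{\D-\d}\}$, in which every vertex has degree $\D$, and then delete precisely the $\D-\d$ edges $v u_1, v u_2, \dots, v u_{\D-\d}$. The point is that detaching $v$ from exactly the $u_i$'s simultaneously fixes three degree requirements at once: $v$ drops from $\D$ to $\D-(\D-\d)=\d$, each $u_i$ drops by exactly one (it loses only its edge to $v$) to $\D-1$, and each $w_j$ keeps its degree $\D$ untouched. This is exactly the degree profile required by $\mathcal{H}_{\d,\D}$, so the resulting graph lies in the family.

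The only thing needing a moment of care—and really the sole place the hypothesis $1\le\d<\D$ is used—is the feasibility of the deletion. Since $1\le\d<\D$ gives $1\le \D-\d\le \D-1$, we are removing between $1$ and $\D-1$ of the $\D$ edges incident to $v$ in $K_{\D+1}$, all going to distinct vertices; hence the deletion is well-defined, leaves $v$ still adjacent to $w_1,\dots,w_\d$, and causes each affected $u_i$ to lose a single edge. I expect no real obstacle here: the argument is a direct construction rather than an existence proof via a counting criterion. (Should a less hands-on route be preferred, one could instead verify that the sequence consisting of $\d$ copies of $\D$, $\D-\d$ copies of $\D-1$, and one $\d$ is graphical using the Erdős–Gallai conditions, but the explicit graph above makes this unnecessary.)
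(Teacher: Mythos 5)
Your proof is correct and takes essentially the same approach as the paper: an explicit construction. In fact your graph (delete the $\D-\d$ edges $vu_1,\dots,vu_{\D-\d}$ from $K_{\D+1}$) is identical, up to relabeling, to the paper's $H_{\d,\D}$ (take $K_\D$ on $v_1,\dots,v_\D$, add a vertex $w$, and join $w$ to $v_1,\dots,v_\d$); you merely describe it by removing edges from $K_{\D+1}$ rather than by attaching a vertex to $K_\D$.
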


\begin{proof}
It suffices to consider the graph $H_{\d,\D}$ obtained from
a complete graph with $\D$ vertices, $v_1,\dots,v_\D,$ by adding an extra vertex $w$ and edges $v_iw$ for every $1\leq i \leq \d$.
\end{proof}

Notice that for any $H\in \mathcal{H}_{\d,\D}$,
$$\mathcal{J}(H)=\tilde{h}(\d)+(\D-\d)\tilde{h}(\D-1)+ \d \tilde{h}(\D). $$

\begin{proposition} \label{p:HHHH}
Given any integers $1 \le \delta< \D$, there is a unique graph (up to isomorphism), $H_{\d,\D}$, in $\mathcal{H}_{\d,\D}$,
where $H_{\d,\D}$ is the graph in the proof of Proposition \ref{p:H}.
\end{proposition}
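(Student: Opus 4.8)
The plan is to take an arbitrary graph $H \in \mathcal{H}_{\d,\D}$ and show that its adjacency structure is completely forced by the prescribed degree sequence, so that $H$ must be isomorphic to the graph $H_{\d,\D}$ constructed in the proof of Proposition \ref{p:H}. Since $H$ has exactly $\D+1$ vertices, any vertex of degree $\D$ is adjacent to every other vertex; I would first record this and let $A$ denote the set of the $\d$ universal vertices of degree $\D$. This is the observation that makes everything rigid.

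Next I would pin down the neighborhood of a vertex $w$ of degree $\d$. Because $w$ is adjacent to each of the $\d$ universal vertices in $A$ and its degree is exactly $\d$, its neighborhood must be precisely $A$; in particular $w$ is non-adjacent to each of the $\D-\d$ vertices of degree $\D-1$. This step is where the low-degree vertex is ``pushed away'' from the bulk of the graph.

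Then I would analyze a vertex $u$ of degree $\D-1$: in a graph on $\D+1$ vertices such a vertex has exactly one non-neighbor, and by the previous step that non-neighbor is forced to be $w$. Hence every degree-$(\D-1)$ vertex is adjacent to all vertices except $w$. Consequently $A$ together with the $\D-\d$ vertices of degree $\D-1$ is a clique on $\d+(\D-\d)=\D$ vertices, i.e.\ a $K_\D$, and $w$ is joined to exactly the $\d$ vertices of $A$. This is exactly the description of $H_{\d,\D}$, and the evident labelling (sending $A$ to $\{v_1,\dots,v_\d\}$, the degree-$(\D-1)$ vertices to $\{v_{\d+1},\dots,v_\D\}$, and $w$ to $w$) gives the required isomorphism.

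The only delicate point, and the step I expect to require the most care in the write-up, is the boundary case $\d=\D-1$: there $w$ and the single remaining vertex of degree $\D-1$ have the same degree, so ``the degree-$\d$ vertex'' is not singled out by the degree sequence alone. I would handle this by observing that the counting argument above applies verbatim to \emph{either} of the two degree-$(\D-1)$ vertices playing the role of $w$, and that both choices produce the same graph up to relabelling, so no genuine ambiguity arises. Conceptually the forcing is identical to the generic case; only the bookkeeping of degree classes needs a short separate remark.
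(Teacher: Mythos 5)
Your proof is correct and takes essentially the same approach as the paper's: both arguments show that the prescribed degree sequence forces the adjacency structure (degree-$\Delta$ vertices are universal, the degree-$\delta$ vertex $w$ is adjacent exactly to them, and each degree-$(\Delta-1)$ vertex has $w$ as its unique non-neighbor), so the graph must be $K_\Delta$ with an extra vertex joined to $\delta$ of its vertices. The boundary case $\delta=\Delta-1$ that you flag is handled in the paper exactly as you suggest, by noting that $w$ need not be unique and that any choice of a degree-$\delta$ vertex makes the argument go through.
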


\begin{proof} Let $G$ be a graph in $\mathcal{H}_{\d,\D}$.
Denote by $w$ a vertex with degree $\d$ (which is unique if $\d<\D-1$) and by $v_1,\dots,v_\D$ the other vertices.
Without loss of generality we can assume that $v_1,\dots,v_\d$ are the neighbors of $w$.
Since $v_{\d+1},\dots,v_\D$ are not neighbors of $w$, they have degree at most $\D-1$; hence, they have degree $\D-1$.
Thus, $v_1,\dots,v_\d$ have degree $\D$.
Therefore, the subgraph of $G$ induced by $v_1,\dots,v_\D$ is a complete graph, and $G$ is isomorphic to $H_{\d,\D}$.
\end{proof}

\begin{theorem} \label{t:tilde2}
Consider any integers $1 \le \delta< \D$.
If $\tilde{h}$ is non-increasing, then $H_{\d,\D}$ is minimal for $\mathcal{J}$ on $\mathfrak{G}^{\D+1}_{\d,\D}$. Moreover,
if $\tilde{h}(\D-2)>\tilde{h}(\D-1)>\tilde{h}(\D)$, then $G$ is minimal for $\mathcal{J}$ on $\mathfrak{G}^{\D+1}_{\d,\D}$ if and only if $G=H_{\d,\D}$.
\end{theorem}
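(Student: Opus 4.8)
The plan is to first determine how many vertices of a graph $G \in \mathfrak{G}^{\D+1}_{\d,\D}$ can attain the maximum degree $\D$, and then to run a short monotonicity argument. The crucial observation is that, since $\d<\D$, a vertex $v$ of degree $\d$ fails to be adjacent to exactly $\D-\d$ of the remaining $\D$ vertices; each such non-neighbour $u$ can be adjacent to at most the $\D-1$ vertices other than $u$ and $v$, so $d_u\le \D-1<\D$. Together with $v$ itself, this exhibits $\D-\d+1$ vertices of degree at most $\D-1$, whence at most $\d$ vertices of $G$ have degree $\D$. I would record this as the key structural fact: writing $a$ for the number of vertices of degree $\D$, one has $1\le a\le \d$.

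Next I would bound $\mathcal{J}(G)$ from below. Separate the $a$ vertices of degree $\D$ (contributing $a\tilde{h}(\D)$), a single vertex of degree exactly $\d$ (which exists since the minimum degree is $\d$, contributing $\tilde{h}(\d)$), and the remaining $\D-a$ vertices, all of degree at most $\D-1$. Since $\tilde{h}$ is non-increasing, each of the latter contributes at least $\tilde{h}(\D-1)$, giving
\[
\mathcal{J}(G)\ge a\tilde{h}(\D)+\tilde{h}(\d)+(\D-a)\tilde{h}(\D-1).
\]
I would then compare the right-hand side with $\mathcal{J}(H_{\d,\D})=\tilde{h}(\d)+(\D-\d)\tilde{h}(\D-1)+\d\,\tilde{h}(\D)$; their difference factors as $(\d-a)\big(\tilde{h}(\D-1)-\tilde{h}(\D)\big)$, which is non-negative because $a\le\d$ and $\tilde{h}$ is non-increasing. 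This yields $\mathcal{J}(G)\ge\mathcal{J}(H_{\d,\D})$, i.e.\ minimality of $H_{\d,\D}$ on $\mathfrak{G}^{\D+1}_{\d,\D}$.

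For the characterization of equality under the strict hypotheses $\tilde{h}(\D-2)>\tilde{h}(\D-1)>\tilde{h}(\D)$, I would trace back when the two inequalities above are tight. Tightness of the factored comparison forces $a=\d$ (using $\tilde{h}(\D-1)>\tilde{h}(\D)$), so $G$ has exactly $\d$ vertices of degree $\D$. Tightness of the displayed inequality forces every one of the remaining $\D-\d$ unseparated vertices to satisfy $\tilde{h}(d_u)=\tilde{h}(\D-1)$; since $\tilde{h}(\D-2)>\tilde{h}(\D-1)$ and $\tilde{h}$ is non-increasing, any degree at most $\D-2$ would give a strictly larger value, so each such vertex has degree exactly $\D-1$ (in particular, when $\d<\D-1$ there is a unique vertex of degree $\d$). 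Hence a minimal $G$ has precisely the degree sequence of $H_{\d,\D}$ --- one vertex of degree $\d$, $\D-\d$ of degree $\D-1$, and $\d$ of degree $\D$ --- and by Proposition \ref{p:HHHH} the only such graph is $H_{\d,\D}$.

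The computations here are routine; the one genuinely load-bearing step is the structural bound $a\le\d$, since it is what prevents pushing more mass onto the cheapest value $\tilde{h}(\D)$ and thereby identifies the degree sequence of $H_{\d,\D}$ as optimal. I would take care over the boundary case $\d=\D-1$, where there is no room for degree $\D-2$ among the non-maximal vertices and the hypothesis $\tilde{h}(\D-2)>\tilde{h}(\D-1)$ is used only vacuously, but the conclusion $a=\d$ still pins down the degree sequence.
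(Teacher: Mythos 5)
Your proof is correct and takes essentially the same route as the paper's: the same structural fact (the non-neighbours of the degree-$\d$ vertex have degree at most $\D-1$, so at most $\d$ vertices attain degree $\D$) yields the bound $\mathcal{J}(G)\ge \d\tilde{h}(\D)+(\D-\d)\tilde{h}(\D-1)+\tilde{h}(\d)=\mathcal{J}(H_{\d,\D})$, and the strict monotonicity hypotheses pin down the extremal degree sequence, after which Proposition \ref{p:HHHH} identifies the graph. Your bookkeeping with the parameter $a$, the factored difference $(\d-a)\bigl(\tilde{h}(\D-1)-\tilde{h}(\D)\bigr)$, and the explicit appeal to Proposition \ref{p:HHHH} (plus the remark on the boundary case $\d=\D-1$) simply make explicit what the paper's terser proof leaves implicit.
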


\begin{proof} Suppose $G\in \mathfrak{G}^{\D+1}_{\d,\D}$. Since there is a vertex with degree $\d$ and $\D+1$ vertices, there are $\D-\d$ vertices with degree at most $\D-1$. Thus,
$$\mathcal{J}(G)\geq \d \tilde{h}(\D)+(\D-\d)\tilde{h}(\D-1)+\tilde{h}(\d)=\mathcal{J}(H_{\d,\D}).$$

Assume that $G\in \mathfrak{G}^{\D+1}_{\d,\D}$. If $G\neq H_{\d,\D}$, then either there is one less vertex with degree $\D$ or one more vertex with degree less than $\D-1$. Thus, since $\tilde{h}(\D-2)>\tilde{h}(\D-1)>\tilde{h}(\D)$ and $\tilde{h}$ is non-increasing, we have
$\mathcal{J}(G)>\mathcal{J}(H_{\d,\D})$.
\end{proof}

For every $1 \le \delta <\D$, let us define the family $\mathcal{K}_{\d,\D}$ of graphs with $\D+2$ vertices, where one of them has degree $\d$ and we have either:
\begin{itemize}
	\item if $\d$ is even, $\D+1$ vertices have degree $\D$,
	\item if $\delta$ is odd, $\D$ vertices have degree $\D$ and the last vertex has degree $\D-1$.
\end{itemize}

\begin{proposition} \label{Prop: minimal D+2} For every integers $1 \le \delta <\D$, the family $\mathcal{K}_{\d,\D}$ is non-empty.
\end{proposition}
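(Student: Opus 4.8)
The plan is to exhibit an explicit member of $\mathcal{K}_{\d,\D}$, following the additive strategy already used in the proof of Proposition \ref{p:H}. First I would start from a complete graph $K_{\D+1}$ on vertices $v_1,\dots,v_{\D+1}$, each of degree $\D$, and attach a new vertex $w$ joined to $v_1,\dots,v_\d$. This gives $\D+2$ vertices in total, puts $w$ at degree $\d$, leaves the non-neighbours $v_{\d+1},\dots,v_{\D+1}$ at degree $\D$, and pushes the neighbours $v_1,\dots,v_\d$ up to the excess degree $\D+1$. The whole task then reduces to correcting these $\d$ over-degree vertices back down to $\D$ by deleting edges among them.

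At this point the parity of $\d$ enters, exactly as in the definition of $\mathcal{K}_{\d,\D}$. When $\d$ is even I would delete a perfect matching on $\{v_1,\dots,v_\d\}$, i.e. $\d/2$ edges, lowering each matched vertex by one to degree $\D$. The resulting graph has a single vertex $w$ of degree $\d$ and all remaining $\D+1$ vertices of degree $\D$, which is precisely the even case.

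When $\d$ is odd a perfect matching on $\d$ vertices does not exist, and this is the one step needing care. Instead I would delete a near-perfect matching covering $v_1,\dots,v_{\d-1}$, which leaves $v_\d$ still at degree $\D+1$, and then delete the single edge $v_\d v_{\D+1}$. Because $\d\le\D-1$, the index $\D+1$ exceeds $\d$, so $v_{\D+1}$ is a non-neighbour of $w$ and the edge $v_\d v_{\D+1}$ has survived all earlier deletions (the matching lies inside $\{v_1,\dots,v_{\d-1}\}$ and the newly added edges are all incident to $w$). Removing it brings $v_\d$ down to $\D$ and $v_{\D+1}$ down to $\D-1$, giving one vertex of degree $\d$, exactly $\D$ vertices of degree $\D$, and one vertex of degree $\D-1$ — the odd case.

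The only things to check are that every deleted edge genuinely exists and that the edges deleted are pairwise distinct, which is routine bookkeeping, most delicate in the odd case where one must confirm $v_\d v_{\D+1}$ is untouched by the matching and by the $w$-edges. I expect no real obstacle beyond this: the even/odd split is forced by the handshake constraint, since the two prescribed degree sums $\d+(\D+1)\D$ and $\d+\D\cdot\D+(\D-1)$ are each even precisely because $\D(\D+1)$ is always even, so the construction is automatically consistent with parity.
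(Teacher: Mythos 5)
Your construction is correct and essentially the same as the paper's: both start from the complete graph $K_{\D+1}$, join a new vertex $w$ to $\d$ of its vertices, and delete a matching of $\lceil \d/2 \rceil$ edges to fix the degrees, with the even/odd split handled exactly as you do. The only differences are cosmetic --- the paper deletes the matching before attaching $w$ rather than after, and in the odd case it takes the matching inside $\{v_1,\dots,v_{\d+1}\}$ (so the degree-$(\D-1)$ vertex is $v_{\d+1}$ rather than your $v_{\D+1}$), which yields a graph isomorphic to yours.
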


\begin{proof} Consider a complete graph with $\D+1$ vertices, $v_1,\dots,v_{\D+1},$ and:
\begin{itemize}
	\item If $\delta$ is even, consider $\delta$ vertices $v_1,\dots,v_\d$ and remove the edges $v_{2i-1}v_{2i}$ for every $1\leq i \leq \frac{\d}{2}$. Finally, add an extra vertex $w$ and edges $v_jw$ for every $1\leq j \leq \d$. Let us denote this graph by $K_{\d,\D}$.
	\item If $\delta$ is odd, consider $\delta+1$ vertices $v_1,\dots,v_{\d+1}$ and eliminate the edges $v_{2i-1}v_{2i}$ for every $1\leq i \leq \frac{\d+1}{2}$. Finally, add an extra vertex $w$ and edges $v_jw$ for every $1\leq j \leq \d$. Let us denote this graph by $K^1_{\d,\D}$.
\end{itemize}
\end{proof}

Notice that for any $K\in \mathcal{K}_{\d,\D}$,
\begin{itemize}
	\item if $\delta$ is even, $\mathcal{J}(K)= (\D+1) \tilde{h}(\D)+\tilde{h}(\d). $
	\item if $\delta$ is odd, $\mathcal{J}(K)= \D \tilde{h}(\D)+\tilde{h}(\D-1)+\tilde{h}(\d). $
\end{itemize}

\begin{proposition} \label{Prop: 211}
For any integers $1 \le \delta\leq \D$, the following statements hold:
\begin{itemize}
	\item if $\delta$ is even, then there is a unique graph (up to isomorphism), $K_{\d,\D}$, in $\mathcal{K}_{\d,\D}$,
	\item if $\delta$ is odd, then there exist only two graphs (up to isomorphism), $K^1_{\d,\D}$ and $K^2_{\d,\D}$, in $\mathcal{K}_{\d,\D}$,
\end{itemize}
where $K_{\d,\D}$ and $K^1_{\d,\D}$ are the graphs in the proof of Proposition \ref{Prop: minimal D+2}
\end{proposition}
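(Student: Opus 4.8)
The plan is to pass to the complement graph and reduce the whole question to a statement about the neighbourhoods of the (at most) two vertices of the complement that are not of degree one. Write $\overline{G}$ for the complement of $G$, so that $d_{\overline{G}}(x)=(\D+1)-d_G(x)$ for every vertex $x$ of the $\D+2$ vertices. Under this correspondence a vertex of degree $\D$ in $G$ becomes a leaf (a vertex of degree $1$) of $\overline{G}$, a vertex of degree $\D-1$ becomes a vertex of degree $2$, and the vertex $w$ of degree $\d$ becomes a vertex of degree $\D+1-\d$. Thus membership $G\in\mathcal{K}_{\d,\D}$ translates into the following: $\overline{G}$ has degree sequence $(\D+1-\d,1,\dots,1)$ with $\D+1$ leaves when $\d$ is even, and degree sequence $(\D+1-\d,2,1,\dots,1)$ with $\D$ leaves when $\d$ is odd. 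First I would record the elementary fact that, since every leaf has a unique neighbour, the edges of $\overline{G}$ joining two leaves form a matching, so each leaf is either attached to a non-leaf vertex or matched to exactly one other leaf.

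For the even case I would argue that $w$ is the only non-leaf vertex, so every leaf is either joined to $w$ or matched to another leaf. Counting the $\D+1-\d$ edges incident to $w$ forces exactly $\D+1-\d$ leaves to be joined to $w$ and the remaining $\d$ leaves to be perfectly matched among themselves; such a matching exists precisely because $\d$ is even. Since all leaves play symmetric roles, this determines $\overline{G}$, and hence $G$, up to isomorphism, so $G\cong K_{\d,\D}$.

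For the odd case the two non-leaf vertices are $w$ (of degree $\D+1-\d$) and $u$ (of degree $2$), and I claim the single degree of freedom is whether the edge $wu$ belongs to $\overline{G}$. If $wu\in E(\overline{G})$, then $w$ is joined to $\D-\d$ leaves, $u$ to one leaf, and the remaining $\d-1$ leaves are matched in pairs (possible since $\d-1$ is even); if $wu\notin E(\overline{G})$, then $w$ is joined to $\D+1-\d$ leaves, $u$ to two leaves, and the remaining $\d-3$ leaves are matched, which is possible exactly when $\d\ge 3$. In either admissible configuration the interchangeability of the leaves shows that $\overline{G}$, and hence $G$, is determined up to isomorphism, yielding the graphs $K^1_{\d,\D}$ and $K^2_{\d,\D}$; they are non-isomorphic because the presence or absence of the edge between the vertex of degree $\D+1-\d$ and the vertex of degree $2$ in $\overline{G}$ (equivalently, the adjacency in $G$ of the degree-$\d$ vertex and the degree-$(\D-1)$ vertex) is an isomorphism invariant. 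This yields at most two isomorphism classes, as asserted.

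The main obstacle is the odd case: one must verify that the adjacency of $w$ and $u$ in $\overline{G}$ is genuinely the only choice that can vary, and that each choice is both realizable (the parity and nonnegativity conditions on the leftover leaves must hold) and rigid up to isomorphism. I would single out the range condition $\d\ge 3$ needed for the second configuration, since it shows that when $\d=1$ the configuration $wu\notin E(\overline{G})$ cannot occur and only one graph arises; the pair $K^1_{\d,\D},K^2_{\d,\D}$ of genuinely distinct graphs is therefore produced exactly when $\d$ is odd with $\d\ge 3$, and this is the point where the bookkeeping must be done with care.
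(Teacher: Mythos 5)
Your proof is correct, and at its core it is the same argument as the paper's: the paper works directly in $G$ by recording which adjacencies are forced and observing that the non-adjacencies among the neighbours of $w$ form a perfect matching, with the case split on whether the degree-$(\D-1)$ vertex is adjacent to $w$; your passage to the complement $\overline{G}$ is exactly this non-adjacency bookkeeping in cleaner language (forced adjacency $\leftrightarrow$ leaf, matching of non-edges $\leftrightarrow$ matching of leaves, the paper's two odd subcases $\leftrightarrow$ your $wu\in E(\overline{G})$ versus $wu\notin E(\overline{G})$). The one genuine added value of your write-up is the explicit range condition $\d\ge 3$ for the second odd configuration: the paper's proof of the $K^2_{\d,\D}$ case writes $v_{\d-2}v_\d, v_{\d-1}v_\d\notin E(G)$, which tacitly requires $\d\ge 3$, and indeed for $\d=1$ that configuration is impossible (all $\D$ vertices of degree $\D$ would be adjacent to $w$'s unique neighbour, forcing its degree to be $\D+1$, not $\D-1$). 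So for $\d=1$ the family $\mathcal{K}_{1,\D}$ contains only one graph, $K^1_{1,\D}$, and the proposition's claim of ``only two graphs'' for odd $\d$ is accurate only for $\d\ge 3$; your proof correctly isolates this, whereas the paper glosses over it. (This does not affect the later applications, since the lower bound in Theorem \ref{t:tilde3} only uses the common value $\mathcal{J}(K^i_{\d,\D})$, which is the same whether one or two such graphs exist.)
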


\begin{proof} Let $G$ be a graph in $\mathcal{K}_{\d,\D}$. Denote by $w$ the vertex with degree $\d$  and by $v_1,\dots,v_{\D+1}$ the other vertices. Without loss of generality we can assume that $v_1,\dots,v_\d$ are the neighbors of $w$.

Assume first that $\d$ is even.
Thus, $v_{\d+1},\dots,v_{\D+1}$ are adjacent to every vertex distinct from $w$.
Now, in $\{v_1,\dots,v_\d\}$ every vertex is adjacent to every vertex in this set except one.
We may assume without loss of generality that $v_{2i-1}v_{2i}\notin E(G)$ for every $1\leq i \leq \frac{\d}{2}$.
Hence, $G$ is isomorphic to the graph $K_{\d,\D}$. %in the proof of Proposition \ref{Prop: minimal D+2}.

Assume now that $\d$ is odd and the vertex with degree $\D-1$ is not adjacent to $w$.
Thus, we can assume that $v_{\d+1},\dots,v_{\D}$ have degree $\D$ (and so, they are adjacent to every vertex distinct from $w$),
and $v_{\D+1}$ has degree $\D-1$ and it is adjacent to every vertex distinct from $w$ and $v_\d$.
Then, $v_\d$ is adjacent to every vertex distinct from $v_{\D+1}$.
Hence, we may assume without loss of generality that $v_{2i-1}v_{2i}\notin E(G)$ for every $1\leq i \leq \frac{\d-1}{2}$.
Thus, $G$ is isomorphic to the graph $K^1_{\d,\D}$. %in the proof of Proposition \ref{Prop: minimal D+2}.

Assume now that $\d$ is odd and the vertex with degree $\D-1$ is adjacent to $w$.
Thus, $v_{\d+1},\dots,v_{\D+1}$ are adjacent to every vertex distinct from $w$, and we can assume that $v_\d$ has degree $\D-1$ and $v_{\d-2}v_\d, v_{\d-1}v_\d \notin E(G)$.
Hence, we may assume without loss of generality that $v_{2i-1}v_{2i}\notin E(G)$ for every $1\leq i \leq \frac{\d-3}{2}$, and therefore $G$ is isomorphic to a unique graph $K^2_{\d,\D}$.
\end{proof}

\begin{theorem} \label{t:tilde3}
Consider any integers $1 \le \delta< \D$.
If $\tilde{h}$ is non-increasing and $G\in \mathcal{K}_{\d,\D}$, then $G$ is minimal for $\mathcal{J}$ on $\mathfrak{G}^{\D+2}_{\d,\D}$.
Moreover, if either $\d$ is even and $\tilde{h}(\D-1)>\tilde{h}(\D)$ or $\d$ is odd and
$\tilde{h}(\D-2)>\tilde{h}(\D-1)>\tilde{h}(\D)$, then $G$ is minimal for $\mathcal{J}$ on $\mathfrak{G}^{\D+2}_{\d,\D}$ if and only if $G\in \mathcal{K}_{\d,\D}$.
\end{theorem}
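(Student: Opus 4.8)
The plan is to follow the pattern of Theorem \ref{t:tilde2}, splitting into the two parities of $\d$ that define $\mathcal{K}_{\d,\D}$. The one essential change is that, for $n=\D+2$ vertices, the adjacency-to-$w$ counting used in Theorem \ref{t:tilde2} no longer forces any non-minimum vertex to have degree below $\D$ (a non-neighbour of $w$ may now reach degree $\D$); so for the odd case this must be replaced by a handshake-parity argument. Throughout I fix $G\in\mathfrak{G}^{\D+2}_{\d,\D}$ and a vertex $w$ of degree $\d$; the remaining $\D+1$ vertices have degrees in $[\d,\D]$, and since $\tilde{h}$ is non-increasing each of them contributes at least $\tilde{h}(\D)$, while those of degree at most $\D-1$ contribute at least $\tilde{h}(\D-1)$. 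The target values $\mathcal{J}(K)$ for $K\in\mathcal{K}_{\d,\D}$ are the ones recorded just before the statement.

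First I would dispose of $\d$ even. Here every vertex $v\neq w$ satisfies $\tilde{h}(d_v)\ge\tilde{h}(\D)$, so $\mathcal{J}(G)\ge\tilde{h}(\d)+(\D+1)\tilde{h}(\D)=\mathcal{J}(K)$, giving minimality at once. For the converse, assuming $\tilde{h}(\D-1)>\tilde{h}(\D)$ and $\mathcal{J}(G)=\mathcal{J}(K)$, equality forces $\tilde{h}(d_v)=\tilde{h}(\D)$ for all $v\neq w$; since $d_v\le\D-1$ would give $\tilde{h}(d_v)\ge\tilde{h}(\D-1)>\tilde{h}(\D)$, every such $v$ has degree exactly $\D$. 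Thus $G$ has one vertex of degree $\d$ and $\D+1$ of degree $\D$, i.e.\ $G\in\mathcal{K}_{\d,\D}$.

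The harder case is $\d$ odd, and this is where the main obstacle sits: the naive bound $\tilde{h}(\d)+(\D+1)\tilde{h}(\D)$ is \emph{not} attained, and one must explain why. Let $p$ be the number of vertices $\neq w$ of degree $\D$. If $p=\D+1$, the degree sum would equal $\d+(\D+1)\D$, which is odd because $(\D+1)\D$ is even and $\d$ is odd, contradicting the handshake lemma. Hence $p\le\D$, and therefore
$$
\sum_{v\neq w}\tilde{h}(d_v)\ge p\,\tilde{h}(\D)+(\D+1-p)\,\tilde{h}(\D-1)=\D\,\tilde{h}(\D)+\tilde{h}(\D-1)+(\D-p)\bigl(\tilde{h}(\D-1)-\tilde{h}(\D)\bigr)\ge \D\,\tilde{h}(\D)+\tilde{h}(\D-1),
$$
which yields $\mathcal{J}(G)\ge\D\tilde{h}(\D)+\tilde{h}(\D-1)+\tilde{h}(\d)=\mathcal{J}(K)$ and hence minimality.

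Finally, for the converse in the odd case I would assume $\tilde{h}(\D-2)>\tilde{h}(\D-1)>\tilde{h}(\D)$ together with $\mathcal{J}(G)=\mathcal{J}(K)$, and trace the two inequalities above backwards. Equality in the second forces $p=\D$ (strictness of $\tilde{h}(\D-1)>\tilde{h}(\D)$ rules out $p<\D$), so exactly one vertex $v\neq w$ has degree at most $\D-1$; equality in the first then forces $\tilde{h}(d_v)=\tilde{h}(\D-1)$, and $\tilde{h}(\D-2)>\tilde{h}(\D-1)$ upgrades this to $d_v=\D-1$. Thus $G$ has one vertex of degree $\d$, $\D$ of degree $\D$ and one of degree $\D-1$, which is exactly the degree sequence defining $\mathcal{K}_{\d,\D}$ for $\d$ odd. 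The only point needing care is that membership in $\mathcal{K}_{\d,\D}$ is a condition on the degree sequence alone, so pinning down the degree sequence suffices; realizability and the enumeration up to isomorphism are already supplied by Propositions \ref{Prop: minimal D+2} and \ref{Prop: 211}.
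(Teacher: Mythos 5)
Your proof is correct and follows essentially the same route as the paper: a degree-counting lower bound (with the handshake parity argument for odd $\d$, which the paper asserts without spelling out) followed by an equality analysis resting on the strict monotonicity hypotheses. The only cosmetic difference is that you trace the equality conditions forward, whereas the paper argues the contrapositive (any $G\in \mathfrak{G}^{\D+2}_{\d,\D}$ with $G\notin\mathcal{K}_{\d,\D}$ satisfies a strict inequality); the content is identical.
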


\begin{proof} Suppose $G\in \mathfrak{G}^{\D+2}_{\d,\D}$.
If $\d$ is even, then
$$\mathcal{J}(G)\geq (\D+1) \tilde{h}(\D)+\tilde{h}(\d)=\mathcal{J}(K_{\d,\D}).$$
If $\d$ is odd, then at least one of the vertices has degree at most $\D-1$, and therefore
$$
\mathcal{J}(G)\geq \D \tilde{h}(\D)+\tilde{h}(\D-1)+\tilde{h}(\d)=\mathcal{J}(K^{i}_{\d,\D}) \quad \mbox{ for } i=1,2.
$$

Assume that $G\in \mathfrak{G}^{\D+2}_{\d,\D}$ and $G\notin \mathcal{K}_{\d,\D}$. If $\d$ is even, then there are at least two vertices with degree less than $\D$ (one of them with degree $\d$). Thus, since  $\tilde{h}(\D-1)>\tilde{h}(\D)$ and $\tilde{h}$ is non-increasing,
$$\mathcal{J}(G)\geq \D \tilde{h}(\D)+\tilde{h}(\D-1)+\tilde{h}(\d)>\mathcal{J}(K_{\d,\D}).$$
If $\d$ is odd, then either there are two vertices with degree less than $\D-1$ (assuming $\d\leq \D-2$) or there are three vertices with degree less than $\D$. Thus, since  $\tilde{h}(\D-2)>\tilde{h}(\D-1)>\tilde{h}(\D)$ and $\tilde{h}$ is non-increasing, either
$$\mathcal{J}(G)\geq \D \tilde{h}(\D)+\tilde{h}(\D-2)+\tilde{h}(\d)>\mathcal{J}(K^{i}_{\d,\D}),$$
or
$$\mathcal{J}(G)\geq (\D-1) \tilde{h}(\D)+2\tilde{h}(\D-1)+\tilde{h}(\d)>\mathcal{J}(K^{i}_{\d,\D}).$$
\end{proof}

If $1\le \d<\D$ and $\d\D$ is odd, then we define $\mathcal{L}_{\d,\D}$ as the family of graphs with a unique vertex with degree $\d$ and $\D+2$ vertices with degree $\D$.

\begin{proposition}  \label{Prop: 213}
If $1\le \d<\D$ and $\d\D$ is odd, then the family $\mathcal{L}_{\d,\D}$ is non-empty.
\end{proposition}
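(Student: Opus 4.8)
The plan is to exhibit a single explicit graph in $\mathcal{L}_{\d,\D}$, which suffices for non-emptiness. A graph in $\mathcal{L}_{\d,\D}$ has exactly $\D+3$ vertices: one vertex $w$ of degree $\d$ and $\D+2$ vertices of degree $\D$. Before constructing it, I would first record the handshake check: the degree sum $\d+(\D+2)\D$ is congruent to $\d+\D \pmod 2$, which is even precisely because $\d\D$ being odd forces both $\d$ and $\D$ to be odd. This is the arithmetic fact that makes the target degree sequence feasible, and it is the only place the oddness hypothesis is used.

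Rather than build $G$ directly, I would construct its complement $\overline{G}$ on the same $\D+3$ vertices. Since the largest possible degree is $\D+2$, a vertex of degree $\D$ in $G$ becomes a vertex of degree $2$ in $\overline{G}$, while $w$ becomes a vertex of degree $k:=\D+2-\d$. Thus the task reduces to realizing a graph on $\D+3$ vertices with one vertex $w$ of degree $k$ and all other $\D+2$ vertices of degree $2$. The key observation is that $k=(\D-\d)+2$ is even (again using that $\d$ and $\D$ are odd) and, since $\d<\D$ forces $\d\le\D-2$, that $k\ge 4$.

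A graph in which every vertex other than $w$ has degree $2$ is, away from $w$, a disjoint union of paths and cycles; the cleanest realization is to take $\overline{G}$ to be a union of $t:=k/2\ge 2$ cycles sharing only the vertex $w$. Concretely, I would partition the remaining $\D+2$ vertices into $t$ groups of sizes $g_1,\dots,g_t$ with each $g_j\ge 2$ --- for instance $t-1$ groups of size $2$ and one group of size $\d+2$, which is possible because $\sum_j g_j=\D+2$ while the minimum admissible total is $2t=k\le\D+2$ --- and then form, for each group, a cycle through $w$ and that group. In the resulting graph $w$ lies on $t$ cycles and has degree $2t=k$, while every other vertex lies on exactly one cycle and has degree $2$.

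The main point to verify --- and the only genuine obstacle --- is that $\overline{G}$ is simple. Each cycle through $w$ and a group of size $g_j$ has length $g_j+1$, so the condition $g_j\ge 2$ guarantees length at least $3$ and hence no loops or repeated edges; in particular the two neighbors of $w$ on any such cycle are distinct. The cycles meet only at $w$ and are otherwise vertex-disjoint, so no multi-edge at $w$ can occur either. Taking complements, $G=\overline{\overline{G}}$ then has $w$ of degree $(\D+2)-k=\d$ and every other vertex of degree $(\D+2)-2=\D$, so $G\in\mathcal{L}_{\d,\D}$ and the family is non-empty. I would close by remarking that complementing this union of cycles yields an explicit direct description of $G$, in the same spirit as the constructions in Propositions \ref{p:H} and \ref{Prop: minimal D+2}.
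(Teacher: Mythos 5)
Your proof is correct, and it takes a route that differs from the paper's in an instructive way, even though it ends up producing the very same graph. The paper constructs a member of $\mathcal{L}_{\d,\D}$ directly: take the complement of a cycle on $\D+2$ vertices (which is $(\D-1)$-regular), attach a new vertex $w$ to $\d$ of them, and restore a perfect matching of former cycle edges on the remaining $\D-\d+2$ vertices to raise their degrees to $\D$; the oddness of $\d\D$ enters only to make that remainder even. You instead complement the whole target graph, reducing the problem to realizing the degree sequence $(k,2,2,\dots,2)$ with $k=\D-\d+2$, which you correctly show is even and at least $4$; sequences whose entries are $2$'s plus one even entry are trivially realizable by a bouquet of $t=k/2$ cycles through $w$, so this is a cleaner reduction, and it yields a whole family of witnesses (one for each partition of the $\D+2$ remaining vertices into $t$ blocks of size at least $2$) rather than a single graph. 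In fact, with your specific partition ($t-1$ blocks of size $2$ and one block of size $\d+2$), your $\overline{G}$ is a bouquet of $t-1$ triangles and one $(\d+3)$-cycle at $w$, and one can check that the complement of the paper's graph is exactly this bouquet, so the two constructions give isomorphic graphs. All your verifications (evenness of $k$, the bound $k\ge 4$ from $\D-\d\ge 2$, simplicity of the bouquet, and the degree count after complementation) are sound; the only nitpick is that the handshake check at the start is not logically needed once the construction is given, and it is not really a separate use of the hypothesis: it is equivalent to the evenness of $k$, which is where the oddness of $\d$ and $\D$ actually does its work.
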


\begin{proof} Consider a graph $G_0$ with $\D+2$ vertices, $v_1,\dots,v_{\D+2}$, which is the complement of a cycle. Add to $G_0$ an extra vertex $w$ and edges $v_iw$ for every $1\leq j \leq \d$ and an edge $v_{2k}v_{2k+1}$ for every
$\frac{\d+1}{2} \leq k \leq \frac{\D+1}{2}$.
The obtained graph belongs to $\mathcal{L}_{\d,\D}$.
\end{proof}

Notice that for any $L\in \mathcal{L}_{\d,\D}$, we have
$$\mathcal{J}(L)=(\D+2)\tilde{h}(\D)+ \tilde{h}(\d). $$

%\begin{proposition} Given $1 \le \delta< \D$ with $\d \D$ odd, there is a unique graph (up to isomorphism), $L_{\d,\D}$, in $\mathcal{L}_{\d,\D}$?
%\end{proposition}
%\begin{proof}
%\end{proof}

\begin{theorem} \label{t:tilde4}
If $1\le \d<\D$, $\d\D$ is odd, $\tilde{h}$ is non-increasing and $G\in \mathcal{L}_{\d,\D}$, then $G$ is minimal
for $\mathcal{J}$ on $\mathfrak{G}^{\D+3}_{\d,\D}$. Moreover,
if $\tilde{h}(\D-1)>\tilde{h}(\D)$, then $G$ is minimal
for $\mathcal{J}$ on $\mathfrak{G}^{\D+3}_{\d,\D}$ if and only if $G\in \mathcal{L}_{\d,\D}$.
\end{theorem}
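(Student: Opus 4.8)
The plan is to follow exactly the template of Theorems~\ref{t:tilde2} and~\ref{t:tilde3}: produce a vertex-degree lower bound valid for every $G\in\mathfrak{G}^{\D+3}_{\d,\D}$ using only the monotonicity of $\tilde{h}$, and then recognize that this bound coincides with the precomputed value $\mathcal{J}(L)=(\D+2)\tilde{h}(\D)+\tilde{h}(\d)$ attained on $\mathcal{L}_{\d,\D}$. First I would fix $G\in\mathfrak{G}^{\D+3}_{\d,\D}$ and single out one vertex $v_0$ of degree $\d$, which exists since $\d$ is the minimum degree. The remaining $\D+2$ vertices all have degree at most $\D$, so, $\tilde{h}$ being non-increasing, each of them contributes at least $\tilde{h}(\D)$; adding the contribution $\tilde{h}(\d)$ of $v_0$ gives $\mathcal{J}(G)\ge\tilde{h}(\d)+(\D+2)\tilde{h}(\D)=\mathcal{J}(L)$. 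Non-emptiness of $\mathcal{L}_{\d,\D}$ (Proposition~\ref{Prop: 213}, which is where the hypothesis $\d\D$ odd is actually needed) guarantees that this value is realized, so every $L\in\mathcal{L}_{\d,\D}$ is minimal for $\mathcal{J}$ on $\mathfrak{G}^{\D+3}_{\d,\D}$.

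For the characterization under the strict hypothesis $\tilde{h}(\D-1)>\tilde{h}(\D)$, one implication is the statement just proved, so I would argue the converse contrapositively: take $G\in\mathfrak{G}^{\D+3}_{\d,\D}$ with $G\notin\mathcal{L}_{\d,\D}$ and show $\mathcal{J}(G)>\mathcal{J}(L)$. The combinatorial key is that a graph on $\D+3$ vertices having a vertex of degree $\d$ and all other $\D+2$ vertices of degree $\D$ is precisely a member of $\mathcal{L}_{\d,\D}$; hence $G\notin\mathcal{L}_{\d,\D}$ forces a second vertex $v_1\ne v_0$ of degree at most $\D-1$. Since $\d\le\D-1$, in every case $\tilde{h}(d_{v_1})\ge\tilde{h}(\D-1)$, so isolating $v_0$ and $v_1$ and bounding the remaining $\D+1$ vertices by $\tilde{h}(\D)$ yields $\mathcal{J}(G)\ge\tilde{h}(\d)+\tilde{h}(\D-1)+(\D+1)\tilde{h}(\D)$, which is strictly larger than $\tilde{h}(\d)+(\D+2)\tilde{h}(\D)=\mathcal{J}(L)$ exactly because $\tilde{h}(\D-1)>\tilde{h}(\D)$. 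Thus no graph outside $\mathcal{L}_{\d,\D}$ can be minimal, which gives the asserted equivalence.

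The estimates are genuinely routine, so I do not anticipate a real difficulty; the only step that demands care is the counting in the converse, namely verifying that failing to lie in $\mathcal{L}_{\d,\D}$ truly produces a \emph{second} vertex of degree at most $\D-1$, rather than merely the single degree-$\d$ vertex already guaranteed by minimality. I would also emphasize that the parity condition $\d\D$ odd enters nowhere in the inequalities themselves: it is used solely, through Proposition~\ref{Prop: 213}, to ensure that the extremal value $(\D+2)\tilde{h}(\D)+\tilde{h}(\d)$ is compatible with the handshake parity and hence attainable, so that the lower bound is sharp on the class $\mathfrak{G}^{\D+3}_{\d,\D}$.
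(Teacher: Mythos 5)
Your proposal is correct and takes essentially the same approach as the paper: the same lower bound $\mathcal{J}(G)\ge(\D+2)\tilde{h}(\D)+\tilde{h}(\d)$ obtained by isolating the degree-$\d$ vertex and bounding the remaining $\D+2$ vertices by $\tilde{h}(\D)$, and the same observation that $G\notin\mathcal{L}_{\d,\D}$ forces a second vertex of degree at most $\D-1$, yielding strict inequality when $\tilde{h}(\D-1)>\tilde{h}(\D)$. Your write-up is merely more explicit than the paper's terse proof about the counting step in the converse and about the fact that the parity hypothesis $\d\D$ odd is needed only (via Proposition~\ref{Prop: 213}) to guarantee that $\mathcal{L}_{\d,\D}$ is non-empty.
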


\begin{proof} Suppose $G\in \mathfrak{G}^{\D+3}_{\d,\D}$. Since there is a vertex with degree $\d$, we have
$$\mathcal{J}(G)\geq (\D+2) \tilde{h}(\D)+\tilde{h}(\d)=\mathcal{J}(L) \mbox{ for any } L \in \mathcal{L}_{\d,\D}.$$

Assume that $G\in \mathfrak{G}^{\D+3}_{\d,\D}$. If $G\notin \mathcal{L}_{\d,\D}$, then there is one less vertex with degree $\D$. Thus, since $\tilde{h}(\D-1)>\tilde{h}(\D)$ and $\tilde{h}$ is non-increasing,
$\mathcal{J}(G)>\mathcal{J}(L)$ for any  $L \in \mathcal{L}_{\d,\D}$.
\end{proof}

\begin{theorem} \label{t:tilde5}
Consider any integers $1 \le \delta< \D$.
Assume that $\tilde{h}$ is non-increasing and $G$ is minimal for $\mathcal{J}$. The following facts hold:
\begin{itemize}
	\item if $\d \D$ is even, then $\D+1\leq |V(G)|\leq \D+2$,
	\item if $\d \D$ is odd and $2\tilde{h}(\D)>\tilde{h}(\D-1)$, then $\D+1\leq |V(G)|\leq \D+2$,
	\item if $\d \D$ is odd and $2\tilde{h}(\D)\leq \tilde{h}(\D-1)$, then $\D+1\leq |V(G)|\leq \D+3$.
\end{itemize}
\end{theorem}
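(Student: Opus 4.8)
The plan is to prove the lower bound $|V(G)|\ge \D+1$ trivially and to obtain the upper bounds by showing that, once $|V(G)|$ exceeds the stated threshold, $\mathcal{J}(G)$ is forced to strictly exceed the value of one of the explicit minimal graphs produced in Theorems \ref{t:tilde2}, \ref{t:tilde3} and \ref{t:tilde4}, contradicting the global minimality of $G$. The lower bound is immediate: a vertex of degree $\D$ has $\D$ neighbours, so any graph of maximum degree $\D$ has at least $\D+1$ vertices.

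For the upper bounds I would first establish one auxiliary estimate valid for every $\G$ with $n$ vertices, minimum degree $\d$ and maximum degree $\D$. Fixing a vertex $w$ of degree $\d$ and using that $\tilde{h}$ is non-increasing and positive, the remaining $n-1$ vertices each contribute at least $\tilde{h}(\D)$, so $\mathcal{J}(\G)\ge \tilde{h}(\d)+(n-1)\tilde{h}(\D)$. Moreover, if the integer $(n-1)\D+\d$ is odd, then the other $n-1$ vertices cannot all have degree exactly $\D$, since otherwise the degree sum would equal the odd number $(n-1)\D+\d$, violating the handshake lemma; hence one of them has degree at most $\D-1$, which gives the sharper bound $\mathcal{J}(\G)\ge \tilde{h}(\d)+\tilde{h}(\D-1)+(n-2)\tilde{h}(\D)$. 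I call the resulting parity-dependent lower bound $L(n)$.

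I would then compare $L(n)$, for $n$ above the threshold, with $\mathcal{J}(K_{\d,\D})$ (or $\mathcal{J}(K^1_{\d,\D})$) and $\mathcal{J}(L_{\d,\D})$ computed just before Theorems \ref{t:tilde3} and \ref{t:tilde4}; by the existence of these graphs (Propositions \ref{Prop: minimal D+2} and \ref{Prop: 213}) and the minimality of $G$ we have $\mathcal{J}(G)\le \mathcal{J}(K_{\d,\D})$ and, when $\d\D$ is odd, $\mathcal{J}(G)\le \mathcal{J}(L_{\d,\D})$. The parity of $(n-1)\D+\d$ depends on $n$ and on the parity of $\d\D$, and this dependence is precisely what generates the case split in the statement. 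When $\d\D$ is even one checks $L(n)>\mathcal{J}(K_{\d,\D})$ (resp. $>\mathcal{J}(K^1_{\d,\D})$) for every $n\ge\D+3$, using only $\tilde{h}(\D)>0$ and $\tilde{h}(\D-1)>0$, so $|V(G)|\le\D+2$. When $\d\D$ is odd the decisive quantity is $\mathcal{J}(L_{\d,\D})-\mathcal{J}(K^1_{\d,\D})=2\tilde{h}(\D)-\tilde{h}(\D-1)$: if it is positive, $n=\D+3$ is already ruled out and $|V(G)|\le\D+2$; if it is non-positive, $n=\D+3$ survives but every $n\ge\D+4$ is excluded, giving $|V(G)|\le\D+3$.

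The main obstacle is the bookkeeping in the last step. When $\D$ is odd the bound $L(n)$ is not monotone in $n$, since consecutive values differ by $\pm\big(2\tilde{h}(\D)-\tilde{h}(\D-1)\big)$, so I cannot simply invoke monotonicity; instead I must compare $L(n)$ with the correct candidate value for each residue of $n$ near the threshold, and then dispose of all large $n$ at once via the crude estimate $L(n)\ge(n-2)\tilde{h}(\D)+\tilde{h}(\d)$. Keeping track of which of $K_{\d,\D}$, $K^1_{\d,\D}$, $L_{\d,\D}$ is the appropriate comparison graph in each parity regime is where the care is required; the rest reduces to the elementary inequalities above.
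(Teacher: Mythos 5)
Your proposal is correct and follows essentially the same route as the paper's proof: the lower bound is immediate from a vertex of degree $\D$, and the upper bounds come from showing that any graph exceeding the stated vertex count has $\mathcal{J}$-value strictly larger than that of $K_{\d,\D}$, $K^1_{\d,\D}$ or a graph in $\mathcal{L}_{\d,\D}$, with the same handshake-parity refinement when $\d$ is odd and $\D$ is even, and the same comparison of $2\tilde{h}(\D)$ against $\tilde{h}(\D-1)$ when $\d\D$ is odd. The paper's write-up is just leaner, applying the bound $\mathcal{J}(G)\ge (n-1)\tilde{h}(\D)+\tilde{h}(\d)$ uniformly rather than tracking a parity-dependent $L(n)$ by residue class.
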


\begin{proof} The lower bound is immediate since there is a vertex with degree $\D$. Suppose that $G$ is a graph with at least $\D+3$ vertices. Then, since one of them has degree $\d$ and
$\tilde{h}$ is non-increasing,
$$
\mathcal{J}(G)
\geq (\D+2) \tilde{h}(\D)+\tilde{h}(\d)
> (\D+1) \tilde{h}(\D)+\tilde{h}(\d).
$$

Consequently, if $\d$ is even, then $\mathcal{J}(G)> \mathcal{J}(K_{\d,\D})$.

If $\d$ is odd and $\D$ is even, there is other vertex with odd degree and, in particular, with degree less than $\D$. Therefore,
$$
\mathcal{J}(G)
\geq (\D+1) \tilde{h}(\D)+\tilde{h}(\D-1)+\tilde{h}(\d)
> \mathcal{J}(K)
$$
for any $K\in \mathcal{K}_{\d,\D}.$

If $\d \D$ is odd and $2\tilde{h}(\D)>\tilde{h}(\D-1)$, then
$$
\mathcal{J}(G)
\geq (\D+2) \tilde{h}(\D)+\tilde{h}(\d)
> \D \tilde{h}(\D)+\tilde{h}(\D-1)+\tilde{h}(\d)
= \mathcal{J}(K)
$$
for any $K\in \mathcal{K}_{\d,\D}.$

Finally, if $\d \D$ is odd and $2\tilde{h}(\D)\leq \tilde{h}(\D-1)$, then suppose that $G$ is a graph with at least $\D+4$ vertices.
Thus,
$$
\mathcal{J}(G)\geq (\D+3) \tilde{h}(\D)+\tilde{h}(\d)
> (\D+2) \tilde{h}(\D)+\tilde{h}(\d)
= \mathcal{J}(L)
$$
for any $L\in \mathcal{L}_{\d,\D}.$
\end{proof}

The previous results allow to obtain the following sharp lower bounds for $\mathcal{J}$
when $\tilde{h}$ is a non-increasing function.

\begin{theorem} \label{t:tilde6}
Consider any integers $1 \le \delta< \D$.
Assume that $\tilde{h}$ is non-increasing and $G$ is a graph with minimum degree $\d$ and maximum degree $\D$.
The following inequalities hold:
\begin{itemize}
	\item if $\d$ is even, then
$$
\mathcal{J}(G)
\ge \min \big\{\d \tilde{h}(\D)+(\D-\d)\tilde{h}(\D-1)+\tilde{h}(\d),\,
(\D+1) \tilde{h}(\D)+\tilde{h}(\d) \big\},
$$
	\item if either $\d$ is odd and $\D$ is even or $\d \D$ is odd and $2\tilde{h}(\D)>\tilde{h}(\D-1)$, then
$$
\mathcal{J}(G)
\ge \min \big\{\d \tilde{h}(\D)+(\D-\d)\tilde{h}(\D-1)+\tilde{h}(\d),\,
\D \tilde{h}(\D)+\tilde{h}(\D-1)+\tilde{h}(\d) \big\},
$$
	\item if $\d \D$ is odd and $2\tilde{h}(\D)\leq \tilde{h}(\D-1)$, then
$$
\mathcal{J}(G)
\ge \min \big\{\d \tilde{h}(\D)+(\D-\d)\tilde{h}(\D-1)+\tilde{h}(\d),\,
%\D \tilde{h}(\D)+\tilde{h}(\D-1)+\tilde{h}(\d),\,
(\D+2) \tilde{h}(\D)+\tilde{h}(\d) \big\}.
$$
\end{itemize}
\end{theorem}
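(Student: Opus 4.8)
The plan is to reduce the statement, through the auxiliary Theorems \ref{t:tilde2}, \ref{t:tilde3} and \ref{t:tilde4}, to a comparison of finitely many explicit values, and then to select the minimum in each parity case. First I would record that a minimal graph for $\mathcal{J}$ exists: since $\tilde{h}$ is non-increasing and positive, every vertex $u$ of a graph with minimum degree $\d$ and maximum degree $\D$ satisfies $\tilde{h}(d_u) \ge \tilde{h}(\D) > 0$, hence $\mathcal{J}(G) \ge |V(G)|\,\tilde{h}(\D)$; as there are only finitely many graphs with a bounded number of vertices, the infimum of $\mathcal{J}$ over all graphs with minimum degree $\d$ and maximum degree $\D$ is attained. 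Fixing such a minimal graph $G^*$, it suffices to compute $\mathcal{J}(G^*)$, because $\mathcal{J}(G) \ge \mathcal{J}(G^*)$ for every admissible $G$.

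Next I would invoke Theorem \ref{t:tilde5} to bound $|V(G^*)|$. In the first two cases of the statement (namely $\d$ even, or $\d$ odd with $\D$ even, or $\d\D$ odd with $2\tilde{h}(\D)>\tilde{h}(\D-1)$) this gives $\D+1 \le |V(G^*)| \le \D+2$, while in the last case ($\d\D$ odd with $2\tilde{h}(\D)\le\tilde{h}(\D-1)$) it gives $\D+1 \le |V(G^*)| \le \D+3$. Since $G^*$ is minimal over all admissible graphs, it is in particular minimal on $\mathfrak{G}^{n}_{\d,\D}$ for $n=|V(G^*)|$, so Theorems \ref{t:tilde2}, \ref{t:tilde3} and \ref{t:tilde4} identify $\mathcal{J}(G^*)$ with the corresponding extremal value: one has $\mathcal{J}(H_{\d,\D})=\d\tilde{h}(\D)+(\D-\d)\tilde{h}(\D-1)+\tilde{h}(\d)$ if $n=\D+1$; $\mathcal{J}(K)$ if $n=\D+2$, which equals $(\D+1)\tilde{h}(\D)+\tilde{h}(\d)$ when $\d$ is even and $\D\tilde{h}(\D)+\tilde{h}(\D-1)+\tilde{h}(\d)$ when $\d$ is odd; and $\mathcal{J}(L)=(\D+2)\tilde{h}(\D)+\tilde{h}(\d)$ if $n=\D+3$. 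Consequently $\mathcal{J}(G^*)$ is the minimum of these candidate values over the admissible range of $n$.

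It then remains to match this minimum with the right-hand sides in the statement. For $\d$ even, and for the case where $\d$ is odd but $n$ is still confined to $\{\D+1,\D+2\}$, the minimum is exactly the displayed two-term minimum, after substituting the expression for $\mathcal{J}(K)$ dictated by the parity of $\d$. In the remaining case $\d\D$ is odd and $n$ may also equal $\D+3$, producing a third candidate $\mathcal{J}(L)$; here the hypothesis $2\tilde{h}(\D)\le\tilde{h}(\D-1)$ yields $\mathcal{J}(L)=\D\tilde{h}(\D)+2\tilde{h}(\D)+\tilde{h}(\d)\le \D\tilde{h}(\D)+\tilde{h}(\D-1)+\tilde{h}(\d)=\mathcal{J}(K)$, so the $\D+2$-vertex candidate is dominated by the $\D+3$-vertex one and may be discarded, leaving precisely the stated two-term minimum of $\mathcal{J}(H_{\d,\D})$ and $\mathcal{J}(L)$.

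The substance of the argument is carried by the earlier results, so the only genuinely delicate point I anticipate is the bookkeeping in this last case: the vertex bound admits three distinct extremal families, yet the stated inequality records only two terms, and reconciling them requires exactly the comparison $2\tilde{h}(\D)\le\tilde{h}(\D-1)$ to absorb the $\D+2$-vertex candidate into the $\D+3$-vertex one. I would also make sure that the parity assumptions used when reading $\mathcal{J}(K)$ off Theorem \ref{t:tilde3} agree with the case divisions of Theorem \ref{t:tilde5}, and that the existence of $G^*$ is established before Theorem \ref{t:tilde5} is applied, since that theorem is phrased for a graph already known to be minimal.
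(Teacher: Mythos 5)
Your proposal is correct and takes essentially the same route as the paper's own proof: bound the vertex count of a minimal graph via Theorem \ref{t:tilde5}, identify its value with $\mathcal{J}(H_{\d,\D})$, $\mathcal{J}(K)$ or $\mathcal{J}(L)$ via Theorems \ref{t:tilde2}, \ref{t:tilde3} and \ref{t:tilde4}, and use $2\tilde{h}(\D)\le\tilde{h}(\D-1)$ to discard the $(\D+2)$-vertex candidate in the last case. Your only addition is the explicit finiteness argument for the existence of a minimal graph, a point the paper leaves implicit.
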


\begin{proof}
Let $G$ be a graph with minimum degree $\d$ and maximum degree $\D$.

If $\d$ is even and $\G$ is minimal for $\mathcal{J}$, then Theorem \ref{t:tilde5} gives $\D+1\leq |V(\G)|\leq \D+2$.
Hence, Theorems \ref{t:tilde2} and \ref{t:tilde3} give
$$
\begin{aligned}
\mathcal{J}(G)
& \ge \min \big\{ \mathcal{J}(H_{\d,\D}),\, \mathcal{J}(K_{\d,\D})\big\}
\\
& = \min \big\{\d \tilde{h}(\D)+(\D-\d)\tilde{h}(\D-1)+\tilde{h}(\d),\,
(\D+1) \tilde{h}(\D)+\tilde{h}(\d) \big\}.
\end{aligned}
$$

If either $\d$ is odd and $\D$ is even or $\d \D$ is odd and $2\tilde{h}(\D)>\tilde{h}(\D-1)$, and $\G$ is minimal for $\mathcal{J}$, then Theorem \ref{t:tilde5} gives $\D+1\leq |V(\G)|\leq \D+2$.
Hence, Theorems \ref{t:tilde2} and \ref{t:tilde3} give
$$
\begin{aligned}
\mathcal{J}(G)
& \ge \min \big\{ \mathcal{J}(H_{\d,\D}),\, \mathcal{J}(K_{\d,\D}^i)\big\}
\\
& = \min \big\{\d \tilde{h}(\D)+(\D-\d)\tilde{h}(\D-1)+\tilde{h}(\d),\,
\D \tilde{h}(\D)+\tilde{h}(\D-1)+\tilde{h}(\d) \big\}.
\end{aligned}
$$

If $\d \D$ is odd and $2\tilde{h}(\D) \le \tilde{h}(\D-1)$, and $\G$ is minimal for $\mathcal{J}$, then Theorem \ref{t:tilde5} gives $\D+1\leq |V(\G)|\leq \D+3$.
Hence, Theorems \ref{t:tilde2}, \ref{t:tilde3} and \ref{t:tilde4} give, if $L \in \mathcal{L}_{\d,\D}$,
$$
\begin{aligned}
\mathcal{J}(G)
& \ge \min \big\{ \mathcal{J}(H_{\d,\D}),\, \mathcal{J}(K_{\d,\D}),\, \mathcal{J}(L) \big\}
\\
& = \min \big\{\d \tilde{h}(\D)+(\D-\d)\tilde{h}(\D-1)+\tilde{h}(\d),\,
\D \tilde{h}(\D)+\tilde{h}(\D-1)+\tilde{h}(\d),\,
(\D+2) \tilde{h}(\D)+\tilde{h}(\d) \big\}.
\end{aligned}
$$
Since $2\tilde{h}(\D) \le \tilde{h}(\D-1)$, we have $(\D+2) \tilde{h}(\D)+\tilde{h}(\d) \le \D \tilde{h}(\D)+\tilde{h}(\D-1)+\tilde{h}(\d)$
and thus
$$
\mathcal{J}(G)
\ge \min \big\{\d \tilde{h}(\D)+(\D-\d)\tilde{h}(\D-1)+\tilde{h}(\d),\,
%\D \tilde{h}(\D)+\tilde{h}(\D-1)+\tilde{h}(\d),\,
(\D+2) \tilde{h}(\D)+\tilde{h}(\d) \big\}.
$$
\end{proof}

\begin{remark}\label{r:log}
Let us consider the multiplicative index
$$
\mathcal{J}'(G):= \prod_{u \in V(G)} \tilde{h}'(d_u),
$$
where $\tilde{h}'$ is any function $\tilde{h}':\ZZ^+ \rightarrow (1,\infty)$.
Since $\log \mathcal{J}'(G)= \sum_{u \in V(G)} \log \tilde{h}'(d_u)$,
Theorems \ref{t:tilde1} and \ref{t:tilde6} can be applied to $\mathcal{J}'$.
\end{remark}

\section{Inequalities for some particular indices}

In this last section, we want to apply the previous results to some particular indices.

Theorems \ref{t:tilde1} and \ref{t:tilde6} have the following consequences for the first variable Zagreb index.

\begin{theorem} \label{t:tilde1bis}
Let us consider $\a \in \RR$ with $\a>0$, any integers $1 \le \delta \le \Delta$, and $G$ a graph with minimum degree $\d$ and maximum degree $\D$.
The following sharp inequalities hold:

If $\D(\d+1)$ is even, then
$$
M_1^{\a}(G)\geq \D \d^{2\a} + \D^{2\a}.
$$

If $\D(\d+1)$ is odd, then
$$
M_1^{\a}(G)\geq (\D-1) \d^{2\a}+(\d+1)^{2\a} + \D^{2\a}.
$$
\end{theorem}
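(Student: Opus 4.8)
The plan is to recognize Theorem \ref{t:tilde1bis} as a direct specialization of the general Theorem \ref{t:tilde1}. First I would observe that the first variable Zagreb index fits the template for $\mathcal{J}$: setting $\tilde{h}(x) := x^{2\a}$, we have $M_1^{\a}(G) = \sum_{u \in V(G)} d_u^{2\a} = \sum_{u \in V(G)} \tilde{h}(d_u) = \mathcal{J}(G)$. So the entire burden is to verify the hypotheses of Theorem \ref{t:tilde1}, namely that $\tilde{h}$ is positive and non-decreasing on $\ZZ^+$.

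Next I would check these hypotheses for $\a > 0$. Positivity is immediate since $x^{2\a} > 0$ for every $x \in \ZZ^+$. For monotonicity, note that the map $t \mapsto t^{2\a}$ is non-decreasing (in fact strictly increasing) on $(0,\infty)$ precisely because the exponent $2\a$ is positive; hence $\tilde{h}(x) = x^{2\a}$ is non-decreasing in $x$. With both hypotheses confirmed, Theorem \ref{t:tilde1} applies verbatim to $\mathcal{J} = M_1^{\a}$.

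Then I would simply substitute $\tilde{h}(x) = x^{2\a}$ into the two bounds furnished by Theorem \ref{t:tilde1}. In the even case, $\D \tilde{h}(\d) + \tilde{h}(\D)$ becomes $\D \d^{2\a} + \D^{2\a}$; in the odd case, $(\D-1)\tilde{h}(\d) + \tilde{h}(\d+1) + \tilde{h}(\D)$ becomes $(\D-1)\d^{2\a} + (\d+1)^{2\a} + \D^{2\a}$. These are exactly the claimed inequalities. The sharpness assertion follows because the graphs in $\mathcal{G}_{\d,\D}$ attain equality, by the minimality part of Theorem \ref{t:tilde1}.

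There is essentially no hard step here, since the work was done in the general theorem; the only point requiring a moment of care is the strictness of monotonicity. If one wants the characterization of extremal graphs (not merely the inequality), one would invoke the ``moreover'' clause of Theorem \ref{t:tilde1}, which requires $\tilde{h}(\d) < \tilde{h}(\d+1)$ in the even case and $\tilde{h}(\d) < \tilde{h}(\d+1) < \tilde{h}(\d+2)$ in the odd case. Since $2\a > 0$ forces $t \mapsto t^{2\a}$ to be strictly increasing, these strict inequalities hold automatically, so the extremal graphs are exactly those in $\mathcal{G}_{\d,\D}$, confirming sharpness.
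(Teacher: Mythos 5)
Your proposal is correct and is exactly the paper's argument: the paper presents Theorem \ref{t:tilde1bis} as an immediate consequence of Theorem \ref{t:tilde1}, obtained by taking $\tilde{h}(x)=x^{2\a}$, which is positive and strictly increasing on $\ZZ^+$ when $\a>0$, and substituting into the two bounds, with sharpness coming from the extremal graphs in $\mathcal{G}_{\d,\D}$. Your additional check of the strict inequalities $\tilde{h}(\d)<\tilde{h}(\d+1)<\tilde{h}(\d+2)$ for the characterization clause is a sound (if implicit in the paper) detail.
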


\begin{theorem} \label{t:tilde6bis}
Let us consider $\a \in \RR$ with $\a<0$, any integers $1 \le \delta < \Delta$, and $G$ a graph with minimum degree $\d$ and maximum degree $\D$.
The following sharp inequalities hold:
\begin{itemize}
	\item if $\d$ is even, then
$$
M_1^{\a}(G)
\ge \min \big\{\d \D^{2\a} + (\D-\d) (\D-1)^{2\a}+ \d^{2\a},\,
(\D+1) \D^{2\a}+ \d^{2\a} \big\},
$$
	\item if either $\d$ is odd and $\D$ is even or $\d \D$ is odd and $2 \D^{2\a} > (\D-1)^{2\a}$, then
$$
M_1^{\a}(G)
\ge \min \big\{\d \D^{2\a} + (\D-\d) (\D-1)^{2\a}+ \d^{2\a},\,
\D^{2\a+1} + (\D-1)^{2\a}+ \d^{2\a} \big\},
$$
	\item if $\d \D$ is odd and $2 \D^{2\a}\leq (\D-1)^{2\a}$, then
$$
M_1^{\a}(G)
\ge \min \big\{\d \D^{2\a} + (\D-\d) (\D-1)^{2\a}+ \d^{2\a},\,
(\D+2) \D^{2\a} + \d^{2\a} \big\}.
$$
\end{itemize}
\end{theorem}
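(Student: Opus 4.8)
The plan is to recognize $M_1^{\a}$ as a special instance of the vertex-degree index $\mathcal{J}$ and then invoke Theorem \ref{t:tilde6} verbatim. First I would set $\tilde{h}(t):=t^{2\a}$ for $t\in\ZZ^+$, so that $M_1^{\a}(G)=\sum_{u\in V(G)}d_u^{2\a}=\mathcal{J}(G)$. Since $\a<0$ we have $2\a<0$, and hence $\tilde{h}$ is a strictly positive, strictly decreasing (in particular non-increasing) function on $\ZZ^+$; this is precisely the hypothesis required by Theorem \ref{t:tilde6}.

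Next I would translate the three cases of Theorem \ref{t:tilde6} into the corresponding expressions for $M_1^{\a}$ by substituting $\tilde{h}(\D)=\D^{2\a}$, $\tilde{h}(\D-1)=(\D-1)^{2\a}$ and $\tilde{h}(\d)=\d^{2\a}$. The only algebraic simplification worth recording is $\D\,\tilde{h}(\D)=\D\cdot\D^{2\a}=\D^{2\a+1}$, which produces the middle term appearing in the second case. The parity conditions ($\d$ even; $\d$ odd with $\D$ even; $\d\D$ odd) carry over unchanged, and the threshold inequality between $2\tilde{h}(\D)$ and $\tilde{h}(\D-1)$ becomes the stated inequality between $2\D^{2\a}$ and $(\D-1)^{2\a}$.

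Finally, to justify that the bounds are sharp, I would exhibit a graph attaining each one. The quantity $\d\D^{2\a}+(\D-\d)(\D-1)^{2\a}+\d^{2\a}$ equals $M_1^{\a}(H_{\d,\D})$, whereas $(\D+1)\D^{2\a}+\d^{2\a}$, $\D^{2\a+1}+(\D-1)^{2\a}+\d^{2\a}$ and $(\D+2)\D^{2\a}+\d^{2\a}$ equal $M_1^{\a}$ evaluated on a graph of $\mathcal{K}_{\d,\D}$ (in its even and odd subcases) and of $\mathcal{L}_{\d,\D}$, respectively. These families are non-empty by Propositions \ref{p:H}, \ref{Prop: minimal D+2} and \ref{Prop: 213}, so whichever term realizes the minimum is actually attained by an admissible graph with minimum degree $\d$ and maximum degree $\D$; this yields sharpness.

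I do not expect a genuine obstacle, as the statement is a direct corollary of Theorem \ref{t:tilde6}. The only points deserving minor care are confirming the direction of monotonicity---that $\a<0$ forces $t\mapsto t^{2\a}$ to be non-increasing rather than non-decreasing---and the bookkeeping that matches each parity and threshold subcase of Theorem \ref{t:tilde6} with the corresponding displayed bound for $M_1^{\a}$.
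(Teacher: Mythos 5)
Your proposal is correct and is exactly the argument the paper intends: Theorem \ref{t:tilde6bis} is stated as an immediate consequence of Theorem \ref{t:tilde6} applied with $\tilde{h}(t)=t^{2\a}$, which is positive and non-increasing precisely because $\a<0$, followed by the same substitutions you carry out. Your added sharpness justification via the extremal families $\mathcal{H}_{\d,\D}$, $\mathcal{K}_{\d,\D}$ and $\mathcal{L}_{\d,\D}$ matches the role these graphs play in the paper's development, so there is nothing to correct.
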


Remark \ref{r:log} allows to apply Theorem \ref{t:tilde1} to the following multiplicative indices (if $\d\ge 2$):
the \emph{first multiplicative Zagreb index} \cite{Gutman8}, defined as
$$
\Pi_1(G) = \prod_{u\in V (G)} d_u^2,
$$
the \emph{Narumi-Katayama index} defined in \cite{NK} as
$$
NK(G) = \prod_{u\in V (G)} d_u,
$$
(note that $\Pi_1(G) = NK(G)^2$ for every graph $G$) and the \emph{modified Narumi-Katayama index} introduced in \cite{GSG} as
$$
NK^*(G) = \prod_{u\in V (G)} d_u^{d_u}.
$$

\begin{theorem} \label{t:product}
Let us consider any integers $1 \le \delta < \Delta$, and $G$ a graph with minimum degree $\d$ and maximum degree $\D$.
The following sharp inequalities hold:

If $\D(\d+1)$ is even, then
$$
\Pi_1(G) \geq \d^{2\D}\D^{2},
\quad
NK(G) \geq \d^{\D}\D ,
\quad
NK^*(G) \geq \d^{\d\D}\D^{\D}.
$$

If $\D(\d+1)$ is odd, then
$$
\Pi_1(G) \geq \d^{2(\D-1)}(\d+1)^2\D^{2},
\quad
NK(G) \geq \d^{\D-1}(\d+1)\D ,
\quad
NK^*(G) \geq \d^{\d(\D-1)}(\d+1)^{\d+1}\D^{\D}.
$$
\end{theorem}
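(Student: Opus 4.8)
The plan is to reduce each of the three multiplicative indices to the additive functional $\mathcal{J}$ by taking logarithms, as indicated in Remark \ref{r:log}, and then to apply Theorem \ref{t:tilde1}. Writing each index as $\mathcal{J}'(G)=\prod_{u\in V(G)}\tilde{h}'(d_u)$, the generating functions are $\tilde{h}'(x)=x^2$ for $\Pi_1$, $\tilde{h}'(x)=x$ for $NK$, and $\tilde{h}'(x)=x^x$ for $NK^*$. Passing to logarithms produces the additive weights $\tilde{h}(x)=\log\tilde{h}'(x)$, namely $2\log x$, $\log x$ and $x\log x$ respectively. Each of these is strictly increasing on $\ZZ^+$, hence non-decreasing, so the hypothesis of Theorem \ref{t:tilde1} is satisfied; moreover, when $\d\ge 2$ each weight is positive on the relevant range of degrees, so Remark \ref{r:log} applies verbatim.

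First I would apply Theorem \ref{t:tilde1} to $\mathcal{J}(G)=\log\mathcal{J}'(G)$. When $\D(\d+1)$ is even this gives $\log\mathcal{J}'(G)\ge \D\tilde{h}(\d)+\tilde{h}(\D)$, and when $\D(\d+1)$ is odd it gives $\log\mathcal{J}'(G)\ge (\D-1)\tilde{h}(\d)+\tilde{h}(\d+1)+\tilde{h}(\D)$. Exponentiating and substituting each choice of $\tilde{h}$ converts these into products of powers. For example, for $NK$ in the even case one obtains $\log NK(G)\ge \D\log\d+\log\D=\log(\d^{\D}\D)$, whence $NK(G)\ge \d^{\D}\D$; the computations for $\Pi_1$ (with weight $2\log x$) and for $NK^*$ (with weight $x\log x$) are identical in structure, and the odd case is handled the same way. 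Carrying this out for all three indices and both parities reproduces exactly the six claimed inequalities.

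For sharpness I would exhibit an extremal graph. By Proposition \ref{prop edges} the family $\mathcal{G}_{\d,\D}$ is non-empty, so pick any $G\in\mathcal{G}_{\d,\D}$. Its degree multiset consists of $\D$ vertices of degree $\d$ and one of degree $\D$ when $\D(\d+1)$ is even, and of $\D-1$ vertices of degree $\d$, one of degree $\d+1$ and one of degree $\D$ when $\D(\d+1)$ is odd. Inserting this multiset directly into each product shows that $G$ meets the corresponding bound with equality, for instance $\Pi_1(G)=(\d^2)^{\D}\,\D^2=\d^{2\D}\D^2$ in the even case.

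The only delicate point is the positivity hypothesis at the boundary $\d=1$, where $\tilde{h}'(1)=1$ forces $\tilde{h}(1)=0$ and Remark \ref{r:log} no longer applies literally. This case is harmless, however, since $\d=1$ makes $\D(\d+1)=2\D$ even, so only the first inequality is in play, and it follows by inspection: the vertex of degree $\D$ contributes the factor $\D^2$ (respectively $\D$, $\D^{\D}$) while every other vertex contributes a factor at least $1$, and the star $K_{1,\D}\in\mathcal{G}_{1,\D}$ shows sharpness. I expect this boundary bookkeeping, rather than any genuine difficulty, to be the only thing requiring care, the substance of the argument being entirely carried by Theorem \ref{t:tilde1}.
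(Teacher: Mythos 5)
Your proof is correct and takes essentially the same route the paper intends: the paper gives no separate proof of Theorem~\ref{t:product}, deriving it precisely from Remark~\ref{r:log} and Theorem~\ref{t:tilde1} applied to $\tilde{h}(x)=2\log x$, $\log x$ and $x\log x$, with sharpness witnessed by the degree sequences of graphs in $\mathcal{G}_{\delta,\Delta}$ (non-empty by Proposition~\ref{prop edges}). Your explicit handling of the boundary case $\delta=1$, where $\log\tilde{h}'(1)=0$ breaks the positivity hypothesis, is a welcome addition rather than a deviation, since the paper's preceding remark is stated only for $\delta\ge 2$ even though the theorem is claimed for all $1\le\delta<\Delta$.
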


\smallskip

\emph{Acknowledgement:}
%The firs author was partially supported by a grant from Ministerio de Econom{\'\i}a y Competititvidad (MTM 2015-63612P), Spain,
%the second author by two grants from Ministerio de Econom{\'\i}a y Competititvidad
%(MTM 2016-78227-C2-1-P and MTM 2015-69323-REDT), Spain, and a grant from CONACYT (FOMIX-CONACyT-UAGro 249818), M\'exico.
%
This work was partially supported by the grants from Ministerio de Econom{\'\i}a y Competititvidad, Agencia Estatal de
Investigación (AEI) and Fondo Europeo de Desarrollo Regional (FEDER) (MTM 2015-63612P, MTM 2016-78227-C2-1-P and MTM 2015-69323-REDT), Spain.
% and from CONACYT (FOMIX-CONACyT-UAGro 249818), M\'exico.


\begin{thebibliography}{99}

%\bibitem{AP} V. Andova, M. Petrusevski, Variable Zagreb Indices and Karamata’s Inequality,
%{\it MATCH Commun. Math. Comput. Chem.\/} {\bf 65} (2011) 685--690.
%
%\bibitem{Das10b} K. C. Das, On geometric-arithmetic index of graphs, {\it MATCH Commun. Math. Comput. Chem.\/} {\bf 64} (2010) 619--630.
%
%\bibitem{DGF} K. C. Das, I. Gutman, B. Furtula, Survey on Geometric-Arithmetic Indices of Graphs,
%{\it MATCH Commun. Math. Comput. Chem.\/} {\bf 65} (2011) 595--644.
%
%\bibitem{DGF2} K. C. Das, I. Gutman, B. Furtula, On first geometric-arithmetic index of graphs,
%{\it Discrete Appl. Math.\/} {\bf 159} (2011) 2030--2037.

\bibitem{Drmota} M. Drmota, Random Trees, An Interplay Between Combinatorics and Probability,
Springer, Wien-New York, 2009.

%\bibitem{EliasiIranmaneshGutman} M. Eliasi, A. Iranmanesh, I. Gutman, Multiplicative versions of first Zagreb index, MATCH Commun. Math. Comput. Chem. 68 (2012) 217-230.

\bibitem{Faj} S. Fajtlowicz, On conjectures of Graffiti--II, {\it Congr. Numer.\/} {\bf 60} (1987) 187--197.

\bibitem{GSG} M. Ghorbani, M. Songhori, I. Gutman, Modified Narumi--Katayama index,
{\it Kragujevac J. Sci.\/} {\bf 34} (2012) 57--64.

%\bibitem{GutmanGhorbani} I. Gutman, M. Ghorbani, Some properties of the Narumi–Katayama index, Appl. Math. Lett. 25 (2012) 1435-1438.

%%%%%%%%%%%%%%%5
\bibitem{Gutman} I. Gutman, Degree--based topological indices, {\it Croat. Chem. Acta} {\bf 86} (2013) 351--361.

\bibitem{GD} I. Gutman, K. C. Das, The first Zagreb index 30 years after, {\it MATCH Commun. Math. Comput. Chem.} {\bf 50} (2004) 83--92.

\bibitem{GR} I. Gutman, T. R\'eti, Zagreb group indices and beyond, {\it Int. J. Chem. Model.} {\bf 6: 2-3} (2014), 191-200.


%%%%%%%%%%%%%%

\bibitem{Gutman8}
I. Gutman, J. To\v{s}ovi\'c, Testing the quality of molecular structure descriptors. Vertex--degreebased topological indices, \emph{J. Serb. Chem. Soc.} {\bf 78(6)} (2013) 805--810.

\bibitem{GT} I. Gutman, N. Trinajsti\'c, Graph theory and molecular orbitals. Total $\pi$--electron
energy of alternant hydrocarbons, {\it Chem. Phys. Lett.\/} {\bf 17} (1972) 535--538.

\bibitem{LZheng} X. Li, J. Zheng, A unified approach to the extremal trees for different indices,
{\it MATCH Commun. Math. Comput. Chem.\/} {\bf 54} (2005) 195--208.

\bibitem{LZhao} X. Li, H. Zhao, Trees with the first smallest and largest generalized topological indices,
{\it MATCH Commun. Math. Comput. Chem.\/} {\bf 50} (2004) 57--62.

\bibitem{LL} M. Liu, B. Liu, Some properties of the first general Zagreb index,
{\it Australas. J. Combin.\/} {\bf 47} (2010) 285--294.

%\bibitem{GF} I. Gutman, B. Furtula (Eds.), Recent Results in the Theory of Randi\'c Index, Univ.
%Kragujevac, Kragujevac, 2008.

%\bibitem{LG} X. Li, I. Gutman, Mathematical Aspects of Randi\'c Type Molecular Structure Descriptors,
%Univ. Kragujevac, Kragujevac, 2006.

%\bibitem{LS} X. Li, Y. Shi, A survey on the Randi\'c index, \emph{MATCH Commun. Math. Comput. Chem.} {\bf 59} (2008) 127--156.

\bibitem{MR} A. Mart\'inez-P\'erez, J. M. Rodr{\'\i}guez, New Lower Bounds for the Geometric-Arithmetic Index, {\it MATCH Commun. Math. Comput. Chem.\/} {\bf 79} (2018) 451--466.

\bibitem{MR2} A. Mart\'inez-P\'erez, J. M. Rodr{\'\i}guez, New Lower Bounds for the Second Variable Zagreb Index, {\it J. Comb. Optim.} {\bf 36(1)} (2018) 194--210.

\bibitem{MN} A. Mili\v{c}evi\'c, S. Nikoli\'c, On variable Zagreb indices, {\it Croat. Chem. Acta\/} {\bf 77} (2004) 97--101.

%\bibitem{MH} M. Mogharrab, G. H. Fath-Tabar, Some bounds on $GA_1$ index of graphs,
%{\it MATCH Commun. Math. Comput. Chem.\/} {\bf 65} (2010) 33--38.

\bibitem{NK} H. Narumi, M. Katayama, Simple topological index. A newly devised index characterizing
the topological nature of structural isomers of saturated hydrocarbons,
{\it Mem. Fac. Engin. Hokkaido Univ.\/} {\bf 16} (1984) 209--214.

\bibitem{NKMT} S. Nikoli\'c, G. Kova\v{c}evi\'c, A. Mili\v{c}evi\'c, N. Trinajsti\'c,
The Zagreb Indices 30 years after, {\it Croat. Chem. Acta\/} {\bf 76} (2003) 113--124.

\bibitem{NMTJ} S. Nikoli\'c, A. Mili\v{c}evi\'c, N. Trinajsti\'c, A. Juri\'c,
On Use of the Variable Zagreb $^\nu M_2$ Index in QSPR: Boiling Points of Benzenoid Hydrocarbons
{\it Molecules\/} {\bf 9} (2004) 1208--1221.

\bibitem{R} M. Randi\'c, On characterization of molecular branching, {\it J. Am. Chem. Soc.\/} {\bf 97} (1975) 6609--6615.

\bibitem{R2} M. Randi\'c, Novel graph theoretical approach to heteroatoms in QSAR,
{\it Chemometrics Intel. Lab. Syst.\/} {\bf 10} (1991) 213--227.

\bibitem{R3} M. Randi\'c,  On computation of optimal parameters for multivariate analysis of structure-property relationship,
{\it J. Chem. Inf. Comput. Sci.\/} {\bf 31} (1991) 970--980.

\bibitem{RPL} M. Randi\'c, D. Plav\v{s}i\'c, N. Ler\v{s}, Variable connectivity index for cycle-containing structures,
{\it J. Chem. Inf. Comput. Sci.\/} {\bf 41} (2001) 657--662.


%\bibitem{RS2} J. M. Rodr{\'\i}guez, J. M. Sigarreta, On the Geometric-Arithmetic Index,
%{\it MATCH Commun. Math. Comput. Chem.\/} {\bf 74} (2015) 103--120.
%
%\bibitem{RS4} J. M. Rodr{\'\i}guez, J. M. Sigarreta, Spectral study of the Geometric-Arithmetic index,
%{\it MATCH Commun. Math. Comput. Chem.\/} {\bf 74} (2015) 121--135.
%
%
%\bibitem{RS3} J. M. Rodr{\'\i}guez, J. M. Sigarreta, Spectral properties of geometric-arithmetic index,
%{\it Appl. Math. Comput.\/} {\bf 277} (2016) 142--153.

%\bibitem{RS} J. A. Rodr{\'\i}guez-Vel\'azquez, J. M. Sigarreta, On the Randi\'c index and condicional parameters of a graph,
%\emph{MATCH Commun. Math. Comput. Chem.} {\bf 54} (2005) 403--416.

%\bibitem{RS0} J. A. Rodr{\'\i}guez-Vel\'azquez, J. Tom\'as-Andreu, On the Randi\'c index of polymeric networks modelled by generalized Sierpinski graphs,
%\emph{MATCH Commun. Math. Comput. Chem.} {\bf 74} (2015) 145--160.

%\bibitem{S} J. M. Sigarreta, Bounds for the geometric-arithmetic index of a graph,
%{\it Miskolc Math. Notes\/} {\bf 16} (2015) 1199--1212.

%\bibitem{TRC} TRC Thermodynamic Tables. Hydrocarbons; Thermodynamic Research Center,
%The Texas A $\&$ M University System: College Station, TX, 1987.

\bibitem{SDGM} M. Singh, K. Ch. Das, S. Gupta, A. K. Madan,
Refined variable Zagreb indices: highly discriminating topological descriptors for QSAR/QSPR,
{\it Int. J. Chem. Modeling\/} {\bf 6(2-3)} 403--428.

%\bibitem{Trinajstic}
%N. Trinajsti\'c, {\it Chemical Graph Theory\/}, CRC Press, Boca Raton, Fl, USA, 2nd edition, 1992.

\bibitem{NGJ} M. V\"oge, A. J. Guttmann, I. Jensen, On the number of benzenoid hydrocarbons, {\it J. Chem. Inf. Comput. Sci.\/} {\bf 42} (2002) 456--466.

%\bibitem{VF} D. Vuki\v{c}evi\'c, B. Furtula, Topological index based on the ratios of geometrical and
%arithmetical means of end-vertex degrees of edges, {\it J. Math. Chem.\/} {\bf 46} (2009) 1369--1376.

\bibitem{Wi} H. Wiener, Structural determination of paraffin boiling points, {\it J. Am. Chem. Soc.\/} {\bf 69} (1947) 17--20.

\bibitem{ZWC} S. Zhang, W. Wang and T. C. E. Cheng, Bicyclic graphs with the first three smallest and largest values of the first general Zagreb index,
{\it MATCH Commun. Math. Comput. Chem.\/} {\bf 55} (2006) 579--592.

%\bibitem{ZZ2} F. Zhang, Q. Zhang, Eigenvalue Inequalities for Matrix Product,
%{\it IEEE Trans. Autom. Control\/} {\bf 51} (2006) 1506--1509.

\bibitem{ZZ} H. Zhang, S. Zhang, Unicyclic graphs with the first three smallest and largest values of the first general Zagreb index,
{\it MATCH Commun. Math. Comput. Chem.\/} {\bf 55} (2006) 427--438.

%\bibitem{ZT2} B. Zhou, N. Trinajsti\'c, On general sum-connectivity index, \emph{J. Math. Chem.} {\bf 47} (2010) 210--218.

\end{thebibliography}
\end{document}